\documentclass[11pt,pdflatex,sn-mathphys-num]{sn-jnl}

\providecommand{\texorpdfstring}[2]{#1}
\makeatletter
\@ifpackageloaded{hyperref}{%
  \pdfstringdefDisableCommands{\def\eqref#1{(\ref{#1})}}%
}{}
\makeatother

\usepackage{graphicx}%
\usepackage{multirow}%
\usepackage{amsmath,amssymb,amsfonts}%
\usepackage{amsthm}%
\usepackage{mathrsfs}%
\usepackage[title]{appendix}%
\usepackage{xcolor}%
\usepackage{textcomp}%
\usepackage{manyfoot}%
\usepackage{booktabs}%
\usepackage{algorithm}%
\usepackage{algorithmicx}%
\usepackage{algpseudocode}%
\usepackage{listings}%

\theoremstyle{thmstyleone}%
\newtheorem{theorem}{Theorem}
\theoremstyle{thmstyletwo}%
\newtheorem{remark}{Remark}%

\theoremstyle{thmstylethree}%
\newtheorem{definition}{Definition}%

\begin{document}

\title[Article Title]{Explicit representation of solutions to a linear wave equation with time delay}


\author[1]{\fnm{Javad} \sur{Asadzade}}\email{javad.asadzade@emu.edu.tr}
\author[1]{\fnm{Jasarat} \sur{Gasimov}}\email{jasarat.gasimov@emu.edu.tr}
\author[1]{\fnm{Nazim} \sur{Mahmudov}}\email{nazim.mahmudov@emu.edu.tr}
\author*[2,3]{\fnm{Ismail} \sur{Huseynov}}\email{ismail.huseynov@ptb.de}

 \affil[1]{Eastern Mediterranean University, Mersin 10, 99628, Turkey}
  \affil[2]{Physikalisch-Technische Bundesanstalt, Berlin 10587, Germany}
  \affil[3]{Technical University of Berlin, Berlin 10623, Germany}


\abstract{This paper develops an explicit spectral representation for solutions of a one-dimensional linear wave equation with a constant time delay. The model is considered on a bounded interval with non-homogeneous Dirichlet boundary data and a prescribed history function. To accommodate the loss of global smoothness in time caused by delay terms, solutions are understood in a \textit{stepwise classical sense}, allowing jump discontinuities in the second time derivative at multiples of the delay while maintaining continuity of the solution and its first time derivative. By combining separation of variables with Sturm-Liouville expansions, the delayed PDE is reduced to a family of scalar second-order delay differential equations. Using delay-dependent fundamental solutions, we derive closed-form representation formulas for the modal dynamics and reconstruct the PDE solution as a Fourier series. Convergence conditions guaranteeing uniform convergence and admissibility of termwise differentiation in space are established. A numerical example demonstrates the practical computation of truncated series solutions and their visualization.}

\keywords{Delayed wave equation, stepwise classical solution, separation of variables, Sturm-Liouville problem, delay differential equations, delayed perturbation functions, Fourier series convergence.}



\maketitle

\section{Introduction}\label{Sec:intro}

The propagation of waves in media with delayed responses has attracted considerable research interest due to its relevance in physics, engineering, and control theory. Delay effects naturally arise in systems where the present state depends not only on the current configuration but also on past behavior. Typical examples include viscoelastic materials, memory-type constitutive laws, and feedback-controlled vibrating structures. The inclusion of time-delay terms in hyperbolic models can substantially affect qualitative properties of solutions, including stability, controllability, and energy distribution.

Classical wave equations without delay have been extensively studied. For instance, Tikhonov and Samarskii~\cite{Tikhonov} provide foundational results on well-posedness and solution methods for second-order partial differential equations; Travis and Webb~\cite{Travis} and Lutz~\cite{Lutz} investigate bounded perturbations and abstract operator cosine functions; and Lin~\cite{Lin} develops time-dependent perturbation theory for abstract second-order evolution equations.

The presence of delay terms significantly increases analytical complexity. Nicaise and Pignotti~\cite{Nicaise} and Nicaise and Valein~\cite{Nicaise2} studied stability and instability of wave equations with boundary or internal delays. Kirane and Said-Houari~\cite{Kirane} analyzed existence and asymptotic stability for viscoelastic wave models with delays. Abuasbeh et al.~\cite{Abuasbeh} investigated controllability and Ulam--Hyers stability for second-order linear time-delay systems, while Assanova et al.~\cite{Assanova} addressed well-posedness of periodic hyperbolic systems with finite delays. Further contributions include the long-time behavior studied by Liu et al.~\cite{Liu} and exponential stabilization for input-delayed one-dimensional wave equations developed by Wang and Xu~\cite{Wang}.

A key direction in the literature is the construction of explicit solution representations. Khusainov, Pokojovy, and Azizbayov~\cite{Khusainov, Azizbayov} obtained explicit representations for one-dimensional wave equations in the presence of a pure time-delay mechanism by introducing delay-adapted sine/cosine-type functions. Subsequent works~\cite{Khusainov2, Khusainov3, Khusainov4, Khusainov5, Huseynov, Huseynov2} extended related techniques to heat-type equations, fractional models, thermoelasticity, harmonic oscillators, and exact control constructions. The closest explicit-series works to the present setting are Khusainov, Pokojovy and Azizbayov~\cite{Khusainov}, Rodríguez et al.~\cite{Rodríguez}, and the one-dimensional case of Jornet~\cite{Jornet}; our equation keeps both present and delayed spatial operators, including first-order spatial terms, together with non-homogeneous boundary data and forcing. Lobo and Valaulikar~\cite{Lobo} applied group-theoretical methods to one-dimensional wave equations with delay. These studies demonstrate that explicit representations are valuable both for qualitative analysis and for efficient numerical evaluation.

In this paper, we consider the following generalized one-dimensional linear wave equation with a single constant \emph{time delay}:
\begin{equation}\label{e.1}
\begin{cases}
\partial_{tt}u(t,x)= a_{1}^{2}\partial_{xx}u(t,x)+a_{2}^{2}\partial_{xx}u(t-\tau,x)
+b_{1}\partial_x u(t,x)+b_{2}\partial_x u(t-\tau,x)\\
\qquad\qquad + \;\ c_{1}u(t,x)+c_{2}u(t-\tau,x)+g(t,x), \quad (t,x)\in[0,T]\times[0,L],\\[1mm]
u(t,0)=\theta_{1}(t),\quad u(t,L)=\theta_{2}(t), \quad t\in[-\tau,T],\\
u(t,x)=\psi(t,x), \quad (t,x)\in[-\tau,0]\times[0,L].
\end{cases}
\end{equation}
where $\tau>0$ is fixed and $a_i,b_i,c_i$ $(i=1,2)$ are real coefficients. The data $g$, $\psi$, $\theta_1$, and $\theta_2$ are assumed to be sufficiently smooth and compatible so that the solution representation and the convergence results below are meaningful.

The main objective is to construct an explicit analytical representation of the solution of~\eqref{e.1} under non-homogeneous boundary conditions and prescribed history. A fundamental point is that time-delay terms typically prevent global $C^{2}$-regularity in time: even for smooth history data, the second time derivative may develop jump discontinuities at the delay grid points $t=k\tau$. For this reason we work with a \emph{stepwise (piecewise-in-time) classical} notion of solution: $u$ and $\partial_t u$ remain continuous, while $\partial_{tt}u$ is continuous on each open step interval and may jump at $t=k\tau$.

Our approach is based on separation of variables and Sturm--Liouville theory. After an analytic simplification step (including an exponential transformation that removes first-order spatial derivatives under a natural coefficient compatibility), the PDE is reduced to a countable family of second-order linear delay differential equations for the Fourier modes. For these scalar delay equations we employ the fundamental solutions $C_{\tau}^{a,b}$ and $S_{\tau}^{a,b}$ (delayed perturbation functions), which generalize the classical cosine/sine-type fundamental solutions for the corresponding undelayed equation (e.g.\ when the delayed coupling vanishes). This yields explicit representation formulas for both the homogeneous and the forced modal dynamics and hence for the PDE solution as a Fourier series.

The present framework extends the explicit-representation philosophy of Khusainov et al.~\cite{Khusainov} to a broader class of delayed wave models that include both instantaneous and delayed spatial operators. In particular, the pure-delay model is recovered as a special case by setting the instantaneous coefficients to zero, i.e.\ $a_{1}=b_{1}=c_{1}=0$. More precisely, this special-case statement is understood at the level of the reduced one-dimensional equation; when the exponential change of variables below involves division by $a_i^2$, degenerate cases with $a_i=0$ should be obtained directly from the already simplified equation rather than from the transformation formula itself. We also provide sufficient conditions ensuring absolute and uniform convergence of the resulting Fourier series and of the termwise differentiated series in space, thereby justifying the formal computations and validating numerical evaluation via truncated expansions. Finally, a numerical example illustrates the practical computation and visualization of truncated series solutions.

The paper is organized as follows. Section~\ref{sec3} introduces the analytical simplification and the stepwise classical solution framework. Section~\ref{sec4} treats the homogeneous problem and derives the modal solution formulas, while Section~\ref{sec5} addresses the non-homogeneous case. Section~\ref{sec7} assembles the general solution representation. Section~\ref{sec8} establishes convergence and termwise differentiation results. Section~\ref{sec9} presents a numerical example and concludes with a discussion of the implications of our findings.

Throughout this paper, we assume that the end time $T$ is a multiple of the delay $\tau>0$, i.e.\ $T=N\tau$ for some $N\in\mathbb{N}$.
To shorten notation, we define the open, left-open, and closed step intervals by
\[
I_k^\circ:=((k-1)\tau,k\tau),\qquad I_k:=((k-1)\tau,k\tau], \qquad \bar I_k:=[(k-1)\tau,k\tau],
\qquad k=1,\dots,N.
\]

\section{Analytical simplification of the wave equation}\label{sec3}

For $T>0$ and $L>0$, we consider the following linear wave equation with a single time-delay $\tau>0$ on the bounded interval $(0,L)$.
The unknown $u$ is governed, for $(t,x)\in[0,T]\times[0,L]$, by
\begin{align}\label{q.1}
\partial_{tt}u(t,x)
&=a^{2}_{1}\partial_{xx}u(t,x)+a^{2}_{2}\partial_{xx}u(t-\tau,x)+b_{1}\partial_{x}u(t,x)\nonumber\\
&+b_{2}\partial_{x}u(t-\tau,x)+c_{1}u(t,x)+c_{2}u(t-\tau,x)+g(t,x),
\end{align}
subject to non-homogeneous Dirichlet boundary and history conditions:
\begin{align}\label{q.2}
\begin{cases}
u(t,0)=\theta_{1}(t),\quad u(t,L)=\theta_{2}(t),\quad t\in [-\tau,T],\\
u(t,x)=\psi(t,x),\quad (t,x)\in [-\tau,0]\times [0,L].
\end{cases}
\end{align}

\begin{definition}[Stepwise (piecewise-in-time) classical solution]\label{def:stepwise-classical}
A \emph{stepwise classical solution} to \eqref{q.1}--\eqref{q.2} is a function
$u:[-\tau,T]\times[0,L]\to\mathbb{R}$ such that:
\begin{enumerate}[{\rm(i)}]
\item $u\in C([-\tau,T]\times[0,L])$ and $u(t,x)=\psi(t,x)$ for $(t,x)\in[-\tau,0]\times[0,L]$.

\item The spatial derivatives $\partial_x u$ and $\partial_{xx}u$ exist on $[-\tau,T]\times[0,L]$ and
\[
\partial_x u,\;\partial_{xx}u\in C([-\tau,T]\times[0,L]).
\]

\item The time derivative $\partial_t u$ and mixed derivative $\partial_{tx}u$ exist and are continuous on $[-\tau,T]\times[0,L]$.
\item For every $k=1,\dots,N$, the second time derivative $\partial_{tt}u$ exists and is continuous on $I_k^\circ\times[0,L]$,
and $\partial_{tt}u$ admits a continuous extension to $\bar I_k\times[0,L]$. In other words, it has finite one-sided limits at the step endpoints
(from within the interval $I_k^\circ$), so it may jump at $t=k\tau$.

\item For all $t\in[-\tau,T]$,
\[
u(t,0)=\theta_1(t),\qquad u(t,L)=\theta_2(t),
\]
and the data satisfy the compatibility on $[-\tau,0]$:
\[
\psi(t,0)=\theta_1(t),\qquad \psi(t,L)=\theta_2(t)\qquad \text{for }t\in[-\tau,0].
\]

\item For each $k=1,\dots,N$ the identity \eqref{q.1} holds pointwise for all $(t,x)\in I_k^\circ\times[0,L]$
(with delayed terms evaluated at $t-\tau$).
\end{enumerate}
\end{definition}

Next, to reduce the number of partial derivative terms, we apply the linear transformation:
\begin{align}\label{q.3}
u(t,x)=e^{-\alpha x}v(t,x),\quad \text{for}\quad (t,x)\in [-\tau,T]\times [0,L],
\end{align}
where $v(t,x)$ is a new unknown function. Differentiating \eqref{q.3} yields
\begin{align}\label{q.4}
\begin{cases}
\partial_{tt}u(t,x)=e^{-\alpha x}\partial_{tt}v(t,x),\\
\partial_{x}u(t,x)=e^{-\alpha x}\big(-\alpha v(t,x) +\partial_{x}v(t,x) \big),\\
\partial_{xx}u(t,x)=e^{-\alpha x}\big(\alpha ^{2} v(t,x) -2\alpha \partial_{x}v(t,x) +\partial_{xx}v(t,x) \big).
\end{cases}
\end{align}
Substituting these expressions into \eqref{q.1}, we obtain the equation for $v$:
\begin{align*}
\partial_{tt}v(t,x)
&=\Big[a_{1}^{2}\partial_{xx}v(t,x)+(b_{1}-2\alpha a_{1}^{2}) \partial_{x}v(t,x)
+(c_{1}+\alpha^{2}a_{1}^{2}-\alpha b_{1})v(t,x)\Big]+e^{\alpha x}g(t,x)\\
&\quad+\Big[a_{2}^{2}\partial_{xx}v(t-\tau,x)+(b_{2}-2\alpha a_{2}^{2}) \partial_{x}v(t-\tau,x)
+(c_{2}+\alpha^{2}a_{2}^{2}-\alpha b_{2})v(t-\tau,x)\Big].
\end{align*}
To simplify the equation, we impose
\[
b_{1}-2\alpha a_{1}^{2}=0,\qquad b_{2}-2\alpha a_{2}^{2}=0,
\]
which implies
\[
\alpha=\frac{b_{1}}{2a_{1}^{2}}\qquad \text{and}\qquad \alpha=\frac{b_{2}}{2a_{2}^{2}}.
\]
Hence, we require the coefficient compatibility condition
\begin{align}\label{q.5}
b_{1}a_{2}^{2}=b_{2}a_{1}^{2},
\end{align}
which is equivalent to $b_{1}/b_{2}=a_{1}^{2}/a_{2}^{2}$ whenever the displayed ratios are well defined.
Choosing $\alpha=\frac{b_{1}}{2a_{1}^{2}}$ (equivalently $\alpha=\frac{b_2}{2a_2^2}$ under \eqref{q.5}), we arrive at the reduced equation
\begin{align}\label{q.6}
\partial_{tt}v(t,x)
=a^{2}_{1}\partial_{xx}v(t,x)+a^{2}_{2}\partial_{xx}v(t-\tau,x)+d_{1}v(t,x)+d_{2}v(t-\tau,x)+f(t,x),
\end{align}
where
\begin{align}\label{q.7}
d_{1}=c_{1}-\frac{b^{2}_{1}}{4a_{1}^{2}},\qquad
d_{2}=c_{2}-\frac{b^{2}_{2}}{4a_{2}^{2}},\qquad
f(t,x)=e^{\frac{b_{1}}{2a_{1}^{2}}x}g(t,x).
\end{align}
Thus, the original problem is reduced to the following problem for $v$ on $(t,x)\in[0,T]\times[0,L]$:
\begin{align}\label{q.8}
\begin{cases}
\partial_{tt}v(t,x)=a^{2}_{1}\partial_{xx}v(t,x)+a^{2}_{2}\partial_{xx}v(t-\tau,x)+d_{1}v(t,x)+d_{2}v(t-\tau,x)+f(t,x),\\[1mm]
v(t,0)=\mu_{1}(t),\quad v(t,L)=\mu_{2}(t),\quad t\in [-\tau,T],\\[1mm]
v(t,x)=\varphi(t,x),\quad (t,x)\in[-\tau,0]\times[0,L],
\end{cases}
\end{align}
with
\[
\mu_{1}(t)=\theta_{1}(t),\qquad
\mu_{2}(t)=e^{\frac{b_{1}}{2a_{1}^{2}}L}\theta_{2}(t),\qquad
\varphi(t,x)=e^{\frac{b_{1}}{2a_{1}^{2}}x}\psi(t,x).
\]

\begin{remark}
Under the compatibility condition \eqref{q.5}, the two choices
$\alpha=\frac{b_{1}}{2a_{1}^{2}}$ and $\alpha=\frac{b_{2}}{2a_{2}^{2}}$ coincide; hence either choice yields the same reduced form.
If \eqref{q.5} fails, one may still choose $\alpha$ to eliminate the non-delayed first-order term, but the delayed first-order term cannot be removed simultaneously.
If one of the second-order coefficients is zero, the corresponding first-order coefficient must be treated separately; in particular, the formulas for $d_i$ in \eqref{q.7} apply directly under the non-degenerate assumption $a_1a_2\ne0$.
\end{remark}

We determine the solution as a linear superposition of three terms:
\begin{align}\label{q.9}
v(t,x)=v_{0}(t,x)+v_{1}(t,x)+G(t,x).
\end{align}
Here $G$ is chosen to satisfy the boundary conditions:
assume $\mu_1,\mu_2\in C^{2}([-\tau,T])$ and define
\begin{align}\label{q10}
G(t,x):=\mu_1(t)+\frac{x}{L}\big(\mu_2(t)-\mu_1(t)\big),
\qquad (t,x)\in[-\tau,T]\times[0,L].
\end{align}
Then $G(t,0)=\mu_1(t)$, $G(t,L)=\mu_2(t)$ and $G_{xx}\equiv 0$.

\medskip
\noindent$\bullet$ The function $v_0$ solves the homogeneous delayed wave equation
\begin{align}\label{q.11}
\partial_{tt}v_0(t,x)
= a^2_{1}\partial_{xx}v_0(t,x)+a^2_{2}\partial_{xx}v_0(t-\tau,x)
+d_{1}v_0(t,x)+d_{2}v_0(t-\tau,x),
\end{align}
for $(t,x)\in[0,T]\times[0,L]$, subject to homogeneous boundary conditions
\[
v_0(t,0)=0,\qquad v_0(t,L)=0,\qquad t\in[-\tau,T],
\]
and history data
\[
v_0(t,x)=\Phi(t,x)\qquad \text{for }(t,x)\in[-\tau,0]\times[0,L],
\]
where
\[
\Phi(t,x):=\varphi(t,x)-G(t,x)
=\varphi(t,x)-\mu_1(t)-\frac{x}{L}\big(\mu_2(t)-\mu_1(t)\big),
\qquad (t,x)\in[-\tau,0]\times[0,L].
\]

\medskip
\noindent$\bullet$ The function $v_1$ solves the non-homogeneous delayed wave equation
\begin{align}\label{qq.12}
\partial_{tt}v_1(t,x)
= a^{2}_{1}\partial_{xx}v_1(t,x)+a^{2}_{2}\partial_{xx}v_1(t-\tau,x)
+d_{1}v_1(t,x)+d_{2}v_1(t-\tau,x)+F(t,x),
\end{align}
for $(t,x)\in[0,T]\times[0,L]$, subject to homogeneous boundary conditions
\[
v_1(t,0)=0,\qquad v_1(t,L)=0,\qquad t\in[-\tau,T],
\]
and (for definiteness) homogeneous history
\[
v_1(t,x)=0\qquad \text{for }(t,x)\in[-\tau,0]\times[0,L].
\]
The source term $F$ is obtained by inserting $v=v_0+v_1+G$ into \eqref{q.6} and using that $G_{xx}\equiv 0$, namely
\begin{align*}
F(t,x)
:= f(t,x)+d_{1}G(t,x)+d_{2}G(t-\tau,x)-\partial_{tt}G(t,x),
\qquad (t,x)\in[0,T]\times[0,L].
\end{align*}
Using \eqref{q10}, we obtain the explicit representation
\begin{align}\label{uu1}
F(t,x)
&= f(t,x)
+ d_{1}\left( \mu_1(t) + \frac{x}{L}\big(\mu_2(t) - \mu_1(t)\big) \right)\nonumber\\
&+ d_{2}\left( \mu_1(t - \tau) + \frac{x}{L}\big(\mu_2(t - \tau) - \mu_1(t - \tau)\big) \right) - \ddot{\mu}_1(t) - \frac{x}{L}\big(\ddot{\mu}_2(t)-\ddot{\mu}_1(t)\big).
\end{align}

This completes the decomposition of \eqref{q.8} into a homogeneous subproblem for $v_0$ and a forced subproblem for $v_1$,
both with homogeneous boundary conditions.

\section{Homogeneous equation}\label{sec4}

In this section, we derive a formal solution for the initial-boundary value problem \eqref{q.11} subject to the homogeneous boundary
conditions $v_0(t,0)=v_0(t,L)=0$ for $t\in[-\tau,T]$ and the history condition $v_0(t,x)=\Phi(t,x)$ for $(t,x)\in[-\tau,0]\times[0,L]$,
where $\Phi=\varphi-G$ is given in Section~\ref{sec3}. To this end, we apply the method of separation of variables and seek a solution of the form
\[
v_0(t,x)=T(t)X(x).
\]
Substituting this ansatz into \eqref{q.11} yields
\begin{align*}
X(x)\ddot{T}(t)
&=a_{1}^{2}X''(x)T(t)+a_{2}^{2}X''(x)T(t-\tau)+d_{1}X(x)T(t)+d_{2}X(x)T(t-\tau),
\end{align*}
and hence
\begin{align*}
X(x)\Big[\ddot{T}(t)-d_{1}T(t)-d_{2}T(t-\tau)\Big]
=X''(x)\Big[a_{1}^{2}T(t)+a_{2}^{2}T(t-\tau)\Big].
\end{align*}
Formally separating the variables, we obtain
\begin{align*}
\frac{X''(x)}{X(x)}
=\frac{\ddot{T}(t)-d_{1}T(t)-d_{2}T(t-\tau)}{a_{1}^{2}T(t)+a_{2}^{2}T(t-\tau)}
=-\lambda^{2}.
\end{align*}
Therefore, we arrive at the following pair of second-order delayed ordinary differential equations:
\begin{align}\label{q.12}
\begin{cases}
X''(x)+\lambda^{2}X(x)=0,\\[1mm]
\ddot{T}(t)+\big(a_{1}^{2}\lambda^{2}-d_{1}\big)T(t)+\big(a_{2}^{2}\lambda^{2}-d_{2}\big)T(t-\tau)=0.
\end{cases}
\end{align}
Because $v_0$ satisfies homogeneous boundary conditions, we impose
\[
X(0)=0,\qquad X(L)=0.
\]
This gives a Sturm-Liouville eigenvalue problem which admits non-trivial solutions only for the eigenvalues
\[
\lambda_n=\frac{n\pi}{L},\qquad n=1,2,\dots,
\]
with corresponding eigenfunctions
\begin{align}\label{q.133}
X_n(x)=\sin\!\Big(\frac{n\pi}{L}x\Big),\qquad n=1,2,\dots.
\end{align}
Assume that
\[
a_{1}^{2}\lambda_n^{2}-d_{1}>0,\qquad a_{2}^{2}\lambda_n^{2}-d_{2}>0,\qquad n=1,2,\dots.
\]
We introduce the notation for $n \in \mathbb{N}$:
\begin{align}\label{def:omeganu}
\omega_{n}:=\sqrt{a_{1}^{2}\lambda_n^{2}-d_{1}}
=\sqrt{\Big(\frac{n\pi a_{1}}{L}\Big)^{2}-d_{1}},
\quad
\upsilon_{n}:=\sqrt{a_{2}^{2}\lambda_n^{2}-d_{2}}
=\sqrt{\Big(\frac{n\pi a_{2}}{L}\Big)^{2}-d_{2}}.
\end{align}
Then the second equation in \eqref{q.12} corresponding to $\lambda=\lambda_n$ takes the form
\begin{align}\label{q.13}
\ddot{T}_n(t)=-\omega_{n}^{2}T_n(t)-\upsilon_{n}^{2}T_n(t-\tau),\qquad n=1,2,\dots.
\end{align}

The initial conditions for each equation in \eqref{q.13} are obtained by expanding the history data $\Phi$ in the sine basis
\eqref{q.133}. For $t\in[-\tau,0]$ we write
\begin{equation}\label{q.14}
\Phi(t,x)
=\sum_{n=1}^{\infty}\Phi_n(t)\sin\!\Big(\frac{n\pi}{L}x\Big),
\end{equation}
where
\begin{equation*}
\Phi_n(t)=\frac{2}{L}\int_{0}^{L}\Phi(t,s)\sin\!\Big(\frac{n\pi}{L}s\Big)\,ds =\frac{2}{L}\int_{0}^{L}\big(\varphi(t,s)-G(t,s)\big)\sin\!\Big(\frac{n\pi}{L}s\Big)\,ds,
\end{equation*}
and, whenever $\partial_t\Phi$ exists for $t\in[-\tau,0]$,
\begin{equation}\label{q.14b}
\partial_t\Phi(t,x)
=\sum_{n=1}^{\infty}\dot{\Phi}_n(t)\sin\!\Big(\frac{n\pi}{L}x\Big),
\end{equation}
where
\begin{equation*}
\dot{\Phi}_n(t)=\frac{2}{L}\int_{0}^{L}\partial_t\Phi(t,s)\sin\!\Big(\frac{n\pi}{L}s\Big)\,ds=\frac{2}{L}\int_{0}^{L}\big(\partial_t\varphi(t,s)-\partial_t G(t,s)\big)\sin\!\Big(\frac{n\pi}{L}s\Big)\,ds.
\end{equation*}
Consequently, the modal functions $T_n$ in \eqref{q.13} satisfy the history conditions
\[
T_n(t)=\Phi_n(t),\qquad \dot{T}_n(t)=\dot{\Phi}_n(t),\qquad t\in[-\tau,0].
\]

To obtain an explicit representation for each $T_n$, we consider the following scalar second-order delay equation:
\begin{equation}\label{eq98}
\begin{cases}
\ddot{y}(t)=a\,y(t)+b\,y(t-\tau)+f(t),\quad t\in[0,T],\\
y(t)=\varphi(t),\quad \dot{y}(t)=\dot{\varphi}(t),\quad t\in[-\tau,0],
\end{cases}
\end{equation}
where $a,b\in\mathbb{R}$ and $f:[0,T]\to\mathbb{R}$ is given. In the following sections, we derive solution representations for both
the homogeneous and non-homogeneous cases.

\subsection{Representation of solution of \texorpdfstring{\eqref{eq98}}{(\ref{eq98})}} \label{subsec:eq98}

In this subsection, we present fundamental solutions for the homogeneous equation \eqref{eq98} (i.e.\ $f\equiv 0$) and derive
a Cauchy-type representation formula. Since solutions typically exhibit jump discontinuities in the second time derivative at the grid points
$t=k\tau$, we first introduce the corresponding solution concept.

\begin{definition}[Stepwise solution of \eqref{eq98}]\label{def:eq98-solution}
A function $y:[-\tau,T]\to\mathbb{R}$ is called a \emph{stepwise solution} of \eqref{eq98} if:
\begin{enumerate}[{\rm(i)}]
\item $y\in C([-\tau,T])$ and $\dot y\in C([-\tau,T])$;
\item for each $k=1,\dots,N$, the second derivative $\ddot y$ exists and is continuous on $I_k^\circ$ and admits a continuous extension to $\bar I_k$
(hence $\ddot y$ may jump at $t=k\tau$);
\item the identity $\ddot y(t)=a\,y(t)+b\,y(t-\tau)+f(t)$ holds pointwise for all $t\in I_k^\circ$, $k=1,\dots,N$;
\item $y(t)=\varphi(t)$ and $\dot y(t)=\dot\varphi(t)$ for all $t\in[-\tau,0]$.
\end{enumerate}
\end{definition}

We now turn to the homogeneous case $f\equiv 0$ in \eqref{eq98}. Following \cite{Abuasbeh}, we define the delayed perturbed hyperbolic cosine- and sine-type
functions as absolutely convergent series involving the truncation $(t)_+:=\max\{0,t\}$.

\begin{definition}[Delayed perturbation functions]\label{cossin}\cite{Abuasbeh}
Let $a,b\in\mathbb{R}$ and $\tau>0$. The \emph{delayed perturbation functions}
$C_{\tau}^{a,b},S_{\tau}^{a,b}:[0,+\infty)\to\mathbb{R}$ are defined by
\begin{align}\label{def:Ctau}
C_{\tau}^{a,b}(t)
&:=\sum_{k=0}^{\infty}\sum_{m=k}^{\infty}\binom{m}{k}a^{\,m-k}b^{\,k}\frac{(t-k\tau)_{+}^{2m}}{(2m)!},
\\
\label{def:Stau}
S_{\tau}^{a,b}(t)
&:=\sum_{k=0}^{\infty}\sum_{m=k}^{\infty}\binom{m}{k}a^{\,m-k}b^{\,k}\frac{(t-k\tau)_{+}^{2m+1}}{(2m+1)!}.
\end{align}
\end{definition}

In all convolution formulas below, $C_{\tau}^{a,b}$ and $S_{\tau}^{a,b}$ are extended by zero to negative arguments; thus $C_{\tau}^{a,b}(\eta)=S_{\tau}^{a,b}(\eta)=0$ for $\eta<0$, while $C_{\tau}^{a,b}(0)=1$ and $S_{\tau}^{a,b}(0)=0$.

\begin{theorem}[Fundamental solutions of the homogeneous delayed equation]\label{combined1}
Let $a,b\in\mathbb{R}$ and $\tau>0$. Then the functions $C_{\tau}^{a,b}$ and $S_{\tau}^{a,b}$ defined in
\eqref{def:Ctau}--\eqref{def:Stau} are stepwise classical solutions (in the sense of Definition~\ref{def:eq98-solution} with $f\equiv 0$)
of the homogeneous delay equation
\[
\ddot y(t)=a\,y(t)+b\,y(t-\tau),\qquad t\ge 0,
\]
and satisfy the initial conditions
\[
C_{\tau}^{a,b}(0)=1,\quad \dot C_{\tau}^{a,b}(0)=0,
\qquad
S_{\tau}^{a,b}(0)=0,\quad \dot S_{\tau}^{a,b}(0)=1.
\]
Moreover, for each integer $n\ge 0$, the identities
\[
\ddot C_{\tau}^{a,b}(t)=a\,C_{\tau}^{a,b}(t)+b\,C_{\tau}^{a,b}(t-\tau),
\qquad
\ddot S_{\tau}^{a,b}(t)=a\,S_{\tau}^{a,b}(t)+b\,S_{\tau}^{a,b}(t-\tau)
\]
hold pointwise for all $t\in(n\tau,(n+1)\tau)$.
\end{theorem}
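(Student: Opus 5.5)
The proof works directly with the series \eqref{def:Ctau}--\eqref{def:Stau}. Fix $t$ in a bounded interval $[0,M]$. Only the finitely many indices $k$ with $k\tau<t$ contribute, because $(t-k\tau)_+=0$ otherwise. For each such $k$, the inner series in $m$ is dominated by
\[
\sum_{m\ge k}\binom{m}{k}|a|^{m-k}|b|^k\frac{M^{2m}}{(2m)!}\le \sum_{m}\frac{(|a|+|b|)^m M^{2m}}{(2m)!}\le \cosh\big(\sqrt{|a|+|b|}\,M\big),
\]
using $\binom{m}{k}|a|^{m-k}|b|^k\le(|a|+|b|)^m$. The same bound, with shifted indices, holds for the termwise differentiated series. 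By the Weierstrass M-test, the series for $C_\tau^{a,b}$, $S_\tau^{a,b}$ and their first and second termwise derivatives then converge absolutely and uniformly on $[0,M]$, restricted to each $\bar I_k$.

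Next comes regularity. The $(k,m)$ term of $C_\tau^{a,b}$ is $(t-k\tau)_+^{2m}/(2m)!$. It is $C^1$ on $\mathbb{R}$ except for the single term $k=m$, $(t-k\tau)_+^{2k}/(2k)!$, which is $C^1$ only when $2k\ge 2$, that is $k\ge1$. For $k\ge1$, the $k=m$ term has second derivative $(t-k\tau)_+^{2k-2}/(2k-2)!$, which jumps at $t=k\tau$ only when $k=1$. This is exactly the stepwise regularity of Definition~\ref{def:eq98-solution}: $C,\dot C$ are continuous, and $\ddot C$ is continuous on each $I_k^\circ$ with one-sided limits. The $k=0$ term is the smooth function $\sum_m a^m t^{2m}/(2m)!$ on $t\ge0$. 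The argument for $S_\tau^{a,b}$ is the same, with each exponent raised by one. The initial values follow from evaluating at $t=0$: only the term $k=0,m=0$ survives, giving $C(0)=1$, $\dot C(0)=0$, $S(0)=0$, $\dot S(0)=1$.

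The core step is the identity $\ddot C=aC+bC(\cdot-\tau)$ on $(n\tau,(n+1)\tau)$. Differentiating termwise, each term with $m\ge1$ becomes $\binom{m}{k}a^{m-k}b^k(t-k\tau)_+^{2m-2}/(2m-2)!$, and the terms with $m=0$ (only $k=0$) vanish. The binomial coefficient is split by Pascal's rule, $\binom{m}{k}=\binom{m-1}{k}+\binom{m-1}{k-1}$.

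The first part, after reindexing $m-1\to m$, gives $a\sum_k\sum_{m\ge k}\binom{m}{k}a^{m-k}b^k (t-k\tau)_+^{2m}/(2m)!=aC(t)$. Here the convention $\binom{m-1}{k}=0$ when $m=k$ handles the boundary term.

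The second part, after reindexing $k-1\to k$ and $m-1\to m$, gives $b\sum_k\sum_{m\ge k}\binom{m}{k}a^{m-k}b^k((t-\tau)-k\tau)_+^{2m}/(2m)!=bC(t-\tau)$. This is consistent with the zero extension of $C$ for $t<\tau$, since all those truncated powers then vanish.

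The computation for $S$ is identical. Rearranging the double sums is legitimate because of the absolute convergence established in the first step.

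I expect the main obstacle to be the bookkeeping at the boundary indices ($m=k$, $k=0$, $m=0$), together with justifying termwise differentiation near the grid points, where the $k=1,m=1$ term has a jump in its second derivative. I would handle the latter by working on each open interval $I_k^\circ$, where every term is smooth. Uniform convergence of the twice-differentiated series there is what permits termwise differentiation.
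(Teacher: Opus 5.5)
Your proposal is correct and follows essentially the same route as the paper's proof: termwise differentiation of the defining series, Pascal's identity $\binom{m}{k}=\binom{m-1}{k}+\binom{m-1}{k-1}$, reindexing the two resulting pieces into $a\,C_{\tau}^{a,b}(t)$ and $b\,C_{\tau}^{a,b}(t-\tau)$ (and likewise for $S_{\tau}^{a,b}$), and reading off the initial values from the $k=m=0$ term. The differences are in how the analysis is justified and what it yields. You justify termwise differentiation by a global M-test bound on $[0,M]$, whereas the paper restricts to $(n\tau,(n+1)\tau)$, where the outer sum is finite and each inner series is an entire power series. Your term-by-term check also proves the continuity of $\dot C_{\tau}^{a,b}$ and $\dot S_{\tau}^{a,b}$ across the grid points, which the paper only asserts, and it shows that $\ddot C_{\tau}^{a,b}$ jumps only at $t=\tau$ while $\ddot S_{\tau}^{a,b}$ has no jumps on $[0,\infty)$.
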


\begin{proof}[Proof sketch (full proof in Appendix)]
Differentiate the defining series term-by-term on each interval $(n\tau,(n+1)\tau)$ (where each $(t-k\tau)_+$ is either identically zero
or a polynomial). After the index shift $m\mapsto m+1$, apply Pascal’s identity
$\binom{m+1}{k}=\binom{m}{k}+\binom{m}{k-1}$ to split the resulting sums into an $a$-part (reconstructing the original series at time $t$)
and a $b$-part (reconstructing the same series at time $t-\tau$). The jump behavior of $\ddot C_{\tau}^{a,b}$ and $\ddot S_{\tau}^{a,b}$
at $t=n\tau$ follows from the truncation $(\cdot)_+$. Details are given in Appendix~A.
\end{proof}

We next provide the Cauchy-type representation for the homogeneous problem \eqref{eq98} (with $f\equiv 0$).

\begin{theorem}[Cauchy formula for the homogeneous problem]\label{c1}
Let $\varphi\in C^{1}([-\tau,0],\mathbb{R})$. Then the stepwise classical solution of \eqref{eq98} with $f\equiv 0$
is given for $t\in[0,T]$ by
\begin{align}\label{f11}
y_{0}(t)
= C_{\tau}^{a,b}(t)\,\varphi(0)
+ S_{\tau}^{a,b}(t)\,\dot{\varphi}(0)
+ \int_{-\tau}^{0} S_{\tau}^{a,b}(t-\tau-s)\,b\,\varphi(s)\,ds.
\end{align}
\end{theorem}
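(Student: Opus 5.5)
We will verify directly that the right-hand side of \eqref{f11}, extended by $y_0=\varphi$ on $[-\tau,0]$, is a stepwise solution in the sense of Definition~\ref{def:eq98-solution}. Uniqueness is then proved separately by the method of steps. On $\bar I_1$ the equation $\ddot y=ay+b\varphi(t-\tau)$ is a linear ODE with continuous forcing and given $y(0),\dot y(0)$, so its solution is unique. Inductively, on each $\bar I_k$ the delayed term is already determined. The values $y(k\tau)$ and $\dot y(k\tau)$ are fixed by continuity of $y$ and $\dot y$, so each step is again a uniquely solvable ODE. Hence it suffices to show that \eqref{f11} satisfies (i)--(iv).

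\textbf{The first two terms.} By Theorem~\ref{combined1}, the functions $C_{\tau}^{a,b}(t)\varphi(0)$ and $S_{\tau}^{a,b}(t)\dot\varphi(0)$ satisfy the homogeneous equation on each open step. This holds with the delayed argument evaluated through the zero extension, since for $t\in(0,\tau)$ we have $C_\tau^{a,b}(t-\tau)=0$. Together these terms supply the initial values $y_0(0)=\varphi(0)$ and $\dot y_0(0)=\dot\varphi(0)$. On $[0,\tau]$, however, the true equation contains $b\varphi(t-\tau)$ rather than $b\,y(t-\tau)$ computed with the zero extension. The integral term must supply exactly this discrepancy.

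\textbf{The integral term.} Set $J(t)=b\int_{-\tau}^0 S_\tau^{a,b}(t-\tau-s)\varphi(s)\,ds$. Since $S(\eta)=0$ for $\eta\le0$, the integrand vanishes unless $s<t-\tau$. Hence $J\equiv0$ on $[0,\tau]$, and for $t\ge\tau$ the integral runs over $s\in[-\tau,\min(0,t-\tau)]$. Differentiating under the integral is justified because $S$ and $\dot S$ are continuous and $S(0)=0$, so the boundary term vanishes. This gives
\[
\dot J(t)=b\int \dot S(t-\tau-s)\varphi(s)\,ds .
\]
A second differentiation produces the boundary term $b\,\dot S(0)\varphi(t-\tau)=b\varphi(t-\tau)$ for $t\in(\tau,2\tau)$ and no boundary term for $t>2\tau$. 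It also produces the term $b\int\ddot S(t-\tau-s)\varphi(s)\,ds$. Inserting $\ddot S(\eta)=aS(\eta)+bS(\eta-\tau)$ from Theorem~\ref{combined1} yields, on each open step,
\[
\ddot J(t)=aJ(t)+bJ(t-\tau)+b\varphi(t-\tau)\mathbf 1_{(\tau,2\tau)}(t).
\]
This equation is valid for a.e.\ $s$ in the integral, and the exceptional set of $s$ has measure zero, so it does not affect the integral.

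\textbf{Assembling the solution.} Now define $y_0$ on $[-\tau,0]$ by $\varphi$. On $(0,\tau)$, $J\equiv0$, and the $C,S$ terms give $\ddot y_0=ay_0+b\cdot0$ with the zero extension. We must instead show that $\ddot y_0=ay_0+b\varphi(t-\tau)$. This mismatch is the main obstacle: the bookkeeping of the shift. The resolution is that the indices are off by one step. Writing $\tilde y$ for the zero extension of $C\varphi(0)+S\dot\varphi(0)$ to negative times, the term $J$ must compensate for $b(\varphi-\tilde y)(t-\tau)$ on the step where $t-\tau<0$.

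Our derivation shows that $J$ produces the forcing $b\varphi(t-\tau)$ on $(\tau,2\tau)$, one step later than required. We will therefore first recheck the shift by testing the smallest case, taking $\varphi$ constant and $t\in(0,\tau)$. If the shift in \eqref{f11} is confirmed, the correct reading is that the integral generates $b\varphi(t-\tau)$ exactly on $(0,\tau)$. This is achieved by the convention $S(t-\tau-s)$ with $t-\tau-s\in(0,\ldots)$ whenever $s<t-\tau$. We then reconcile the claim using the alternative derivation: Laplace-transform the equation, using $\mathcal L[S](p)=(p^2-a-be^{-p\tau})^{-1}$, and match the history contribution $be^{-p\tau}\int_{-\tau}^0e^{-ps}\varphi(s)\,ds$. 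This pins down the exact argument in the kernel.

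Finally, $y_0$ and $\dot y_0$ are continuous, including at $t=0$. Each $\ddot y_0$ extends continuously to every closed step, because $\ddot C$, $\ddot S$ and the boundary terms have one-sided limits.
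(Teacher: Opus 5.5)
\textbf{The proof does not close: the computation for the integral term is off by one step.} Your overall plan is the same as the paper's: verify \eqref{f11} directly, and get uniqueness from the method of steps. The break is in the support of $J$.

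For $t\in(0,\tau)$ we have $t-\tau\in(-\tau,0)$. So the set $\{s\in[-\tau,0]:\,s<t-\tau\}=[-\tau,t-\tau)$ is nonempty, and
\[
J(t)=b\int_{-\tau}^{t-\tau}S_\tau^{a,b}(t-\tau-s)\varphi(s)\,ds .
\]
Thus $J$ vanishes only at $t=0$, not on $[0,\tau]$. Likewise, the upper limit $\min(0,t-\tau)$ moves with $t$ exactly when $t<\tau$. For $t\ge\tau$ your own range $[-\tau,\min(0,t-\tau)]$ is just the fixed interval $[-\tau,0]$.

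Hence the Leibniz boundary term $b\,\dot S_\tau^{a,b}(0^+)\varphi(t-\tau)=b\varphi(t-\tau)$ appears on $(0,\tau)$, and there is none for $t>\tau$. With your placement you get $\ddot y_0=ay_0$ on $(0,\tau)$ and a spurious extra forcing on $(\tau,2\tau)$, which contradicts the statement. You then leave this ``mismatch'' unresolved and defer to a recheck and a Laplace computation that are announced but never carried out. So item (iii) of Definition~\ref{def:eq98-solution} is not actually verified on any step.

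\textbf{The fix is the correct bookkeeping.} On every open step,
\[
\ddot J(t)=a\,J(t)+b^{2}\int_{-\tau}^{0}S_\tau^{a,b}(t-2\tau-s)\,\varphi(s)\,ds+b\,\varphi(t-\tau)\,\mathbf{1}_{(0,\tau)}(t).
\]
The middle integral vanishes for $t<\tau$, since then $t-2\tau-s\le t-\tau<0$.

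\begin{itemize}
\item On $(0,\tau)$: using also $C_\tau^{a,b}(t-\tau)=S_\tau^{a,b}(t-\tau)=0$, you get $\ddot y_0=ay_0+b\varphi(t-\tau)$. Since $y_0=\varphi$ on $[-\tau,0]$, this equals $ay_0+by_0(t-\tau)$.
\item For $t>\tau$: the $C$, $S$ and $J$ pieces each shift to their value at $t-\tau>0$, giving $ay_0(t)+by_0(t-\tau)$ because \eqref{f11} applies at $t-\tau$.
\end{itemize}

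Your Laplace check would confirm this. $\mathcal L[S_\tau^{a,b}](p)=(p^2-a-be^{-p\tau})^{-1}$ multiplied by $be^{-p\tau}\int_{-\tau}^0e^{-ps}\varphi(s)\,ds$ inverts exactly to $b\int_{-\tau}^0S_\tau^{a,b}(t-\tau-s)\varphi(s)\,ds$. So the shift in \eqref{f11} is correct; the error was in your support computation.

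\textbf{Comparison with the paper.} The paper's verification in Appendix~\ref{app:proof-c1} differentiates under the integral with no boundary term at all. It also identifies the bracket
\[
C_\tau^{a,b}(t-\tau)\varphi(0)+S_\tau^{a,b}(t-\tau)\dot\varphi(0)+b\int_{-\tau}^0S_\tau^{a,b}(t-2\tau-s)\varphi(s)\,ds
\]
with $y_0(t-\tau)$ even for $t<\tau$, where under the zero-extension convention it equals $0$ rather than $\varphi(t-\tau)$. These two omissions cancel. Once you correct the support, your version, which tracks the boundary term explicitly, is the more careful proof.
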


\begin{proof}[Proof sketch (full proof in Appendix)]
On each interval $(n\tau,(n+1)\tau)$ one uses the method of steps: the term $y(t-\tau)$ is already known from the previous step,
so \eqref{eq98} reduces to a classical second-order ODE with a known inhomogeneity. The representation \eqref{f11} is obtained by matching
$y_0$ and $\dot y_0$ at $t=n\tau$ and using that $C_{\tau}^{a,b}$ and $S_{\tau}^{a,b}$ generate the corresponding homogeneous dynamics.
A complete induction argument is provided in Appendix~B.
\end{proof}

Next, we derive an explicit solution formula for the linear non-homogeneous delay equation \eqref{eq98} with a zero initial (history) condition.

\begin{theorem}\label{t2}
Let $f\in C([0,T],\mathbb{R})$. Consider \eqref{eq98} with the zero history
\[
y(t)=0,\qquad \dot y(t)=0,\qquad t\in[-\tau,0].
\]
Then the (stepwise classical) solution $y_p$ of \eqref{eq98} is given, for $t\ge 0$, by
\begin{align}\label{13}
y_p(t)=\int_{0}^{t} S_{\tau}^{a,b}(t-s)\,f(s)\,ds.
\end{align}
\end{theorem}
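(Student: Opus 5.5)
We verify directly that $y_p$ in \eqref{13} is a stepwise solution in the sense of Definition~\ref{def:eq98-solution} with zero history, by differentiating under the integral and using the properties of $S_{\tau}^{a,b}$ from Theorem~\ref{combined1}. Since the history is zero, we extend $y_p$ by $0$ on $[-\tau,0]$. We also use the convention that $S_{\tau}^{a,b}$ vanishes for negative arguments.

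\emph{Regularity of the kernel.} From \eqref{def:Stau}, $S_{\tau}^{a,b}\in C^1([0,\infty))$, with $S_{\tau}^{a,b}(0)=0$ and $\dot S_{\tau}^{a,b}(0)=1$. Its derivative $\dot S_{\tau}^{a,b}$ is continuous and its second derivative $\ddot S_{\tau}^{a,b}$ is piecewise continuous with possible jumps only at $k\tau$. This holds because each term $(t-k\tau)_+^{2m+1}$ is $C^{2m}$ for $m\ge 0$. The only term that is merely $C^0$ in its second derivative is $m=k=0$, namely $t_+$. Every term with $k\ge1$, $m\ge k$ has exponent at least $3$, so it is $C^2$. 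Thus on $[0,\infty)$ the function $\ddot S_{\tau}^{a,b}$ is in fact continuous. Uniform convergence of the differentiated series on compacts follows from the factorial bounds, using $\sum_{m\ge k}\binom{m}{k}|a|^{m-k}|b|^k t^{2m}/(2m)!\le \cosh(\sqrt{(|a|+|b|)}\,t)$-type estimates.

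\emph{Differentiation of the convolution.} By the Leibniz rule,
\[
\dot y_p(t)=S_{\tau}^{a,b}(0)f(t)+\int_0^t \dot S_{\tau}^{a,b}(t-s)f(s)\,ds=\int_0^t \dot S_{\tau}^{a,b}(t-s)f(s)\,ds,
\]
\[
\ddot y_p(t)=\dot S_{\tau}^{a,b}(0)f(t)+\int_0^t \ddot S_{\tau}^{a,b}(t-s)f(s)\,ds=f(t)+\int_0^t \ddot S_{\tau}^{a,b}(t-s)f(s)\,ds .
\]
Hence $y_p(0)=\dot y_p(0)=0$, which matches the zero history, so $y_p$ and $\dot y_p$ are continuous on $[-\tau,T]$. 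Moreover, $\ddot y_p$ is continuous on each $I_k^\circ$ and extends continuously to $\bar I_k$, because $f$ is continuous and $\ddot S_{\tau}^{a,b}$ is bounded and piecewise continuous.

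\emph{Verifying the equation.} By Theorem~\ref{combined1}, $\ddot S_{\tau}^{a,b}(\eta)=aS_{\tau}^{a,b}(\eta)+bS_{\tau}^{a,b}(\eta-\tau)$ for a.e.\ $\eta\ge0$, with the zero extension for negative arguments. Substituting this into the integral gives
\[
\ddot y_p(t)=f(t)+a\,y_p(t)+b\int_0^t S_{\tau}^{a,b}(t-\tau-s)f(s)\,ds .
\]
Since $S_{\tau}^{a,b}(t-\tau-s)=0$ for $s>t-\tau$, the last integral equals $\int_0^{t-\tau}S_{\tau}^{a,b}(t-\tau-s)f(s)\,ds=y_p(t-\tau)$ when $t\ge\tau$. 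When $t<\tau$ it vanishes, which again equals $y_p(t-\tau)=0$. This yields $\ddot y_p=ay_p+by_p(t-\tau)+f$ on each $I_k^\circ$.

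\emph{Main obstacle.} The delicate step is justifying the a.e.\ kernel identity and the term-by-term differentiation of $S_{\tau}^{a,b}$ up to the points $k\tau$. This is only needed a.e.\ inside the integral, and follows from Theorem~\ref{combined1} together with the uniform convergence bounds. Uniqueness, if required, follows by the method of steps: on each interval the problem reduces to a linear ODE with known forcing and matched $C^1$ data, as in the proof of Theorem~\ref{c1}.
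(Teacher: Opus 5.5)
Your proposal is correct and follows essentially the same route as the paper: zero-extend $y_p$, differentiate the convolution twice by Leibniz' rule using $S_{\tau}^{a,b}(0)=0$ and $\dot S_{\tau}^{a,b}(0)=1$, substitute the kernel identity from Theorem~\ref{combined1}, and identify the delayed integral as $y_p(t-\tau)$ via the zero extension. You also observe that $\ddot S_{\tau}^{a,b}$ is actually continuous on $[0,\infty)$, since every $k\ge1$ term carries a power $(t-k\tau)_+^{2m+1}$ with $2m+1\ge 3$; this is correct and justifies differentiating under the integral sign more cleanly than the paper's open-step-interval argument.
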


\begin{proof}[Proof sketch]
Define $y_p$ by \eqref{13} and set $y_p(t)=0$ for $t\in[-\tau,0]$. Since $S_{\tau}^{a,b}$ is $C^2$ on each open step interval
$(n\tau,(n+1)\tau)$ and $\dot S_{\tau}^{a,b}$ is continuous on $[0,\infty)$, differentiation under the integral sign is legitimate on each
open step interval. For $t$ away from the grid points $k\tau$ we compute
\[
\dot y_p(t)=\int_0^t \dot S_{\tau}^{a,b}(t-s)f(s)\,ds,\qquad
\ddot y_p(t)=\int_0^t \ddot S_{\tau}^{a,b}(t-s)f(s)\,ds+\dot S_{\tau}^{a,b}(0)f(t).
\]
Using $\dot S_{\tau}^{a,b}(0)=1$ and Theorem~\ref{combined1},
$\ddot S_{\tau}^{a,b}(\xi)=aS_{\tau}^{a,b}(\xi)+bS_{\tau}^{a,b}(\xi-\tau)$ for $\xi$ in the relevant open step intervals, we get
\[
\ddot y_p(t)=a\int_0^t S_{\tau}^{a,b}(t-s)f(s)\,ds
+b\int_0^t S_{\tau}^{a,b}(t-\tau-s)f(s)\,ds+f(t).
\]
Because $S_{\tau}^{a,b}(\eta)=0$ for $\eta<0$, the second integral equals $\int_0^{t-\tau} S_{\tau}^{a,b}(t-\tau-s)f(s)\,ds=y_p(t-\tau)$
(with the convention $y_p(t-\tau)=0$ for $t<\tau$). Hence $\ddot y_p(t)=a y_p(t)+b y_p(t-\tau)+f(t)$ holds pointwise for
$t\in(n\tau,(n+1)\tau)$, and $y_p(0)=\dot y_p(0)=0$ follows from \eqref{13}. The full argument is given in Appendix~\ref{app:proof-t2}.
\end{proof}

Combining Theorem~\ref{c1} and Theorem~\ref{t2}, we obtain the representation formula for the general non-homogeneous problem.

\begin{theorem}\label{j1}
Let $\varphi\in C^{1}([-\tau,0],\mathbb{R})$ and $f\in C([0,T],\mathbb{R})$. Then the stepwise classical solution $y$ of \eqref{eq98} is
\[
y(t)=
\begin{cases}
\varphi(t), & -\tau\le t\le 0,\\[1mm]
C_{\tau}^{a,b}(t)\,\varphi(0)+S_{\tau}^{a,b}(t)\,\dot\varphi(0)
+\displaystyle\int_{-\tau}^{0} S_{\tau}^{a,b}(t-\tau-s)\,b\,\varphi(s)\,ds
+\displaystyle\int_{0}^{t} S_{\tau}^{a,b}(t-s)\,f(s)\,ds, & t\ge 0.
\end{cases}
\]
\end{theorem}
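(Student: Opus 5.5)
The plan is to prove Theorem~\ref{j1} by linear superposition of the two previous results, followed by a uniqueness argument via the method of steps. Define $y$ piecewise as in the statement. For $t\ge0$ write $y=y_0+y_p$, where $y_0$ is given by \eqref{f11} and $y_p$ by \eqref{13}.

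\textbf{Existence.} By Theorem~\ref{c1}, $y_0$ is a stepwise classical solution of \eqref{eq98} with $f\equiv0$ and history $\varphi$. By Theorem~\ref{t2}, $y_p$, extended by zero on $[-\tau,0]$, is a stepwise classical solution with forcing $f$ and zero history. Their sum $Y:=y_0+y_p$ (with $Y=\varphi$ on $[-\tau,0]$) then has the following properties:
\begin{itemize}
\item[(a)] $Y$ and $\dot Y$ are continuous on $[-\tau,T]$, since each summand is.
\item[(b)] $\ddot Y$ is continuous on each $I_k^\circ$ and extends continuously to $\bar I_k$, since each summand has this property.
\item[(c)] On each $I_k^\circ$, adding the two pointwise identities gives $\ddot Y=aY+bY(t-\tau)+f$, because the equation is linear and the delayed terms also add: $y_0(t-\tau)+y_p(t-\tau)=Y(t-\tau)$ on $[-\tau,T-\tau]$.
\item[(d)] $Y=\varphi$ and $\dot Y=\dot\varphi$ on $[-\tau,0]$.
\end{itemize}
I would check (d) at $t=0$ explicitly. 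From \eqref{f11}, $y_0(0)=\varphi(0)$ because $C_\tau^{a,b}(0)=1$, $S_\tau^{a,b}(0)=0$, and the integrand $S_\tau^{a,b}(-\tau-s)$ vanishes for $s\in(-\tau,0]$. The same zero-extension argument, together with $\dot S_\tau^{a,b}(0)=1$, gives $\dot y_0(0)=\dot\varphi(0)$. Finally $y_p(0)=\dot y_p(0)=0$ by Theorem~\ref{t2}. Hence the two-sided continuity of $Y$ and $\dot Y$ at $t=0$ holds, and $Y$ is a stepwise solution in the sense of Definition~\ref{def:eq98-solution}.

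\textbf{Uniqueness.} Suppose $Y_1,Y_2$ are two stepwise solutions and set $w=Y_1-Y_2$. Then $w$ has zero history and solves $\ddot w=aw+bw(t-\tau)$ on each $I_k^\circ$. Proceed by induction on $k$. Assume $w\equiv0$ on $[-\tau,(k-1)\tau]$. On $I_k$ the delayed term $w(t-\tau)$ is then zero, so $w$ solves $\ddot w=aw$ on $I_k^\circ$ with $w((k-1)\tau)=\dot w((k-1)\tau)=0$, where these values are taken from the continuity of $w$ and $\dot w$.

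The one point needing care is that $\ddot w$ is only known on the open interval. Since $\ddot w$ extends continuously to $\bar I_k$, $w$ is $C^2$ on the closed interval in the one-sided sense: this follows from the mean value theorem applied to $\dot w$. Standard ODE uniqueness then gives $w\equiv0$ on $\bar I_k$, which completes the induction.

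I expect no real obstacle. The only delicate point is the behaviour at the grid points, namely matching the derivative at $t=0$ and justifying uniqueness across jumps of $\ddot y$. Both are handled by the one-sided continuity assumptions built into Definition~\ref{def:eq98-solution}.
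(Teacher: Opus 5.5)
Your proposal is correct, and its existence part is the paper's argument: you superpose $y_0$ from Theorem~\ref{c1} and $y_p$ from Theorem~\ref{t2}, use linearity on each open step interval, and use $y_p\equiv\dot y_p\equiv 0$ on $[-\tau,0]$ to match the history. Your method-of-steps uniqueness argument goes beyond the paper, which asserts uniqueness only implicitly through the phrase ``the stepwise classical solution''; your induction is what justifies that definite article.
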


\begin{proof}[Proof sketch]
Let $y_0$ be the homogeneous solution from Theorem~\ref{c1} (with history $\varphi$) and let $y_p$ be the zero-history particular solution from
Theorem~\ref{t2}. By linearity of \eqref{eq98}, the sum $y=y_0+y_p$ satisfies the forced equation on each open step interval and matches the
prescribed history on $[-\tau,0]$, because $y_p\equiv 0$ and $\dot y_p\equiv 0$ on $[-\tau,0]$. The full proof is immediate from
Theorems~\ref{c1} and \ref{t2}.
\end{proof}

\subsection{The solution of homogeneous equation}

In this subsection, we derive the solution representation for the linear homogeneous delayed wave equation \eqref{q.13}. For each $n=1,2,\dots$, consider
\begin{equation}\label{eq:Tn-problem}
\ddot T_n(t)=-\omega_n^{2}T_n(t)-\upsilon_n^{2}T_n(t-\tau),\quad t\in[0,T],
\end{equation}
together with the history conditions
\[
T_n(t)=\Phi_n(t),\quad \dot T_n(t)=\dot\Phi_n(t),\quad t\in[-\tau,0].
\]
Assume that $\Phi_n\in C^{1}([-\tau,0])$ (in particular, $\dot\Phi_n(0)$ is well-defined as the one-sided derivative at $0$). 
Then, by Theorem~\ref{c1} applied to \eqref{eq:Tn-problem} with $a=-\omega_n^{2}$ and $b=-\upsilon_n^{2}$, there exists a unique stepwise
solution $T_n$ (continuous with continuous first derivative, while $\ddot T_n$ may have jump discontinuities at $t=k\tau$), and it is given for $t\in [0,T]$ by
\begin{align}\label{eq:Tn-formula}
T_{n}(t)
&= C^{-\omega^{2}_{n},-\upsilon_{n}^{2}}_{\tau}(t)\,\Phi_{n}(0)
+ S^{-\omega^{2}_{n},-\upsilon_{n}^{2}}_{\tau}(t)\,\dot{\Phi}_{n}(0)\nonumber\\
&-\upsilon_{n}^{2}\int_{-\tau}^{0} S^{-\omega^{2}_{n},-\upsilon_{n}^{2}}_{\tau}(t-\tau-s)\,\Phi_{n}(s)\,ds .
\end{align}

Assuming the data are sufficiently smooth so that the Fourier series below converges in the sense required later, the solution $v_0$
of \eqref{q.11} satisfying the homogeneous boundary conditions and the history $v_0(t,\cdot)=\Phi(t,\cdot)$ for $t\in[-\tau,0]$ admits the representation
\begin{align}\label{eq:v0-series}
v_{0}(t,x)
&=\sum_{n=1}^{\infty} T_n(t)\,\sin\!\Big(\frac{n\pi}{L}x\Big),\qquad (t,x)\in[0,T]\times[0,L],
\end{align}
where the history coefficients are
\begin{align}\label{eq:Phi-n}
\Phi_n(t)
&:=\frac{2}{L}\int_{0}^{L}\Phi(t,s)\,\sin\!\Big(\frac{n\pi}{L}s\Big)\,ds
=\frac{2}{L}\int_{0}^{L}\big(\varphi(t,s)-G(t,s)\big)\,\sin\!\Big(\frac{n\pi}{L}s\Big)\,ds,
\quad t\in[-\tau,0].
\end{align}
Moreover, if $\partial_t\varphi$ and $\partial_t G$ exist and are continuous on $[-\tau,0]\times[0,L]$, then $\Phi_n\in C^{1}([-\tau,0])$ and
\[
\dot\Phi_n(t)=\frac{2}{L}\int_{0}^{L}\big(\partial_t\varphi(t,s)-\partial_t G(t,s)\big)\,\sin\!\Big(\frac{n\pi}{L}s\Big)\,ds,
\quad t\in[-\tau,0],
\]
so that the value $\dot\Phi_n(0)$ used in \eqref{eq:Tn-formula} is well-defined.

\section{Non-homogeneous equation}\label{sec5}

Next, we analyze the non-homogeneous equation \eqref{qq.12} with the right-hand side given by \eqref{uu1}, subject to homogeneous boundary
and homogeneous history conditions:
\begin{equation}\label{q.15}
\partial_{tt}v_1(t,x)
= a_{1}^{2}\partial_{xx}v_1(t,x)+a_{2}^{2}\partial_{xx}v_1(t-\tau,x)
+d_{1}v_1(t,x)+d_{2}v_1(t-\tau,x)+F(t,x),
\end{equation}
for $(t,x)\in[0,T]\times[0,L]$, with
\[
v_1(t,0)=0,\quad v_1(t,L)=0\quad \text{for }t\in [-\tau, T],
\quad
v_1(t,x)=0\quad \text{for }(t,x)\in[-\tau,0]\times[0,L].
\]
The source term $F$ is defined for $(t,x)\in[0,T]\times[0,L]$:
\begin{align}\label{F-def-sec5}
F(t,x)
&= f(t,x)
+ d_{1}\left( \mu_1(t) + \frac{x}{L}\big(\mu_2(t) - \mu_1(t)\big) \right) \nonumber\\
&+ d_{2}\left( \mu_1(t - \tau) + \frac{x}{L}\big(\mu_2(t - \tau) - \mu_1(t - \tau)\big) \right)- \ddot{\mu}_1(t) - \frac{x}{L}\big(\ddot{\mu}_2(t) - \ddot{\mu}_1(t)\big).
\end{align}
We expand $v_1$ into the sine eigenfunctions of the Dirichlet Sturm-Liouville problem:
\begin{equation}\label{q.16}
v_{1}(t,x)=\sum_{n=1}^{\infty}T_{1,n}(t)\sin\!\Big(\frac{n\pi}{L}x\Big).
\end{equation}
Substituting \eqref{q.16} into \eqref{q.15}, using orthogonality of $\sin(n\pi x/L)$, and comparing Fourier coefficients, we obtain for each
$n=1,2,\dots$ the forced delay differential equation
\begin{equation}\label{q.17}
\ddot T_{1,n}(t)= -\omega_n^{2}T_{1,n}(t)-\upsilon_n^{2}T_{1,n}(t-\tau)+F_n(t),\qquad t\in[0,T],
\end{equation}
with homogeneous history
\[
T_{1,n}(t)=0,\quad \dot T_{1,n}(t)=0,\quad t\in[-\tau,0],
\]
where the forcing coefficients are given by
\begin{equation}\label{Fn-def}
F_n(t):=\frac{2}{L}\int_{0}^{L}F(t,s)\,\sin\!\Big(\frac{n\pi}{L}s\Big)\,ds,\quad t\in[0,T].
\end{equation}
By Theorem~\ref{t2} applied to \eqref{q.17} with $a=-\omega_n^2$, $b=-\upsilon_n^2$ and $f=F_n$, the unique stepwise solution is
\begin{equation}\label{q.18}
T_{1,n}(t)=\int_{0}^{t} S_{\tau}^{-\omega_{n}^{2},-\upsilon_{n}^{2}}(t-s)\,F_{n}(s)\,ds,\qquad t\in [0,T].
\end{equation}
Therefore, the (formal) series representation of $v_1$ is
\begin{equation}\label{q.19}
v_{1}(t,x)
=\sum_{n=1}^{\infty}\left(\int_{0}^{t} S_{\tau}^{-\omega_{n}^{2},-\upsilon_{n}^{2}}(t-s)\,F_{n}(s)\,ds\right)
\sin\!\Big(\frac{n\pi}{L}x\Big),
\quad (t,x)\in[0,T]\times[0,L].
\end{equation}

\subsection{General case solution}\label{sec7}
Recalling the decomposition $v=v_0+v_1+G$, we combine the homogeneous part (Section~\ref{sec4}) with the forced part
(Section~\ref{sec5}). Formally, for $(t,x)\in[0,T]\times[0,L]$,
\begin{align}\label{q.20}
v(t,x)&
=\sum_{n=1}^{\infty}\Bigg(
C^{-\omega^{2}_{n},-\upsilon_{n}^{2}}_{\tau}(t)\,\Phi_{n}(0)
+S^{-\omega^{2}_{n},-\upsilon_{n}^{2}}_{\tau}(t)\,\dot{\Phi}_{n}(0)\nonumber\\
&-\upsilon_{n}^{2}\int_{-\tau}^{0}S^{-\omega^{2}_{n},-\upsilon_{n}^{2}}_{\tau}(t-\tau-s)\,\Phi_{n}(s)\,ds \nonumber\\
&+\int_{0}^{t} S_{\tau}^{-\omega_{n}^{2},-\upsilon_{n}^{2}}(t-s)\,F_{n}(s)\,ds
\Bigg)\sin\!\Big(\frac{n\pi}{L}x\Big)
+G(t,x),
\end{align}
where $\Phi_n$ is the sine coefficient of the history $\Phi=\varphi-G$ on $[-\tau,0]$ (cf.\ \eqref{eq:Phi-n}) and $F_n$ is defined in \eqref{Fn-def}. The solution of the original problem is then recovered by $u(t,x)=e^{-\alpha x}v(t,x)$, with $\alpha=b_1/(2a_1^2)$ in the non-degenerate compatible case.

\section{Convergence of the Fourier series}\label{sec8}

\begin{theorem}\label{thm:conv}
Let $T>0$, $\tau>0$ and set $K:=\left\lceil \frac{T}{\tau}\right\rceil$.
Assume that the modal frequencies satisfy
\[
\omega_n^2=\Big(\frac{n\pi a_1}{L}\Big)^2-d_1>0,\qquad 
\upsilon_n^2=\Big(\frac{n\pi a_2}{L}\Big)^2-d_2>0
\]
for all sufficiently large $n$.
Let $\Phi_n$ be the Fourier coefficients of the history $\Phi=\varphi-G$ on $[-\tau,0]$ (see \eqref{eq:Phi-n})
and let $F_n$ be the Fourier coefficients of the source term $F$ on $[0,T]$ (see \eqref{Fn-def}).
Assume that for some fixed $\alpha>0$,
\[
\lim_{n\to\infty}\Big(|\Phi_n(0)|+|\dot\Phi_n(0)|\Big)\,n^{K+3+\alpha}=0,
\]
\[
\lim_{n\to\infty}\max_{-\tau\le s\le 0}|\Phi_n(s)|\,n^{K+4+\alpha}=0,
\]
\[
\lim_{n\to\infty}\max_{0\le s\le T}|F_n(s)|\,n^{K+2+\alpha}=0.
\]

Equivalently, the three decay rates are
\[
|\Phi_n(0)|+|\dot\Phi_n(0)|=o(n^{-K-3-\alpha}),\qquad
\max_{[-\tau,0]}|\Phi_n|=o(n^{-K-4-\alpha}),\qquad
\max_{[0,T]}|F_n|=o(n^{-K-2-\alpha}).
\]

Then the Fourier series representation \eqref{q.20} for $v$ converges absolutely and uniformly on $[0,T]\times[0,L]$.
Moreover,
\[
v,\; v_x,\; v_{xx},\; v_t,\; v_{tx}\in C([0,T]\times[0,L]),
\]
and $v_{tt}$ exists and is continuous on each open step strip $((m-1)\tau,m\tau)\times[0,L]$ ($m=1,\dots,K$), with finite one-sided limits at
$t=m\tau$. All these derivatives can be obtained by termwise differentiation of the Fourier series on the corresponding domains, and the
differentiated series converge absolutely and uniformly on those domains.
\end{theorem}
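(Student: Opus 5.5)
The plan is to prove everything from one set of bounds on the delayed perturbation functions and their time derivatives up to order two, uniform on $[0,T]$ and polynomial in $n$. Set $a=-\omega_n^2$, $b=-\upsilon_n^2$ and recall $|a|,|b|\le c\,n^2$. On $[0,T]$ only the terms $k\le K$ of \eqref{def:Ctau}--\eqref{def:Stau} are nonzero, because $(t-k\tau)_+=0$ for $k\tau\ge t$. I would not bound the series termwise in absolute value, since that gives $e^{c n T}$ growth. Instead I would use the method-of-steps structure behind Theorems~\ref{combined1} and \ref{c1}.

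On $[0,\tau]$ we have $C_\tau^{a,b}(t)=\cos(\omega_n t)$ and $S_\tau^{a,b}(t)=\sin(\omega_n t)/\omega_n$. On the $m$-th step both functions solve $\ddot y+\omega_n^2 y=-\upsilon_n^2 y(t-\tau)$. The variation-of-constants formula then gives
\[
y(t)=\cos(\omega_n(t-t_0))\,y(t_0)+\frac{\sin(\omega_n(t-t_0))}{\omega_n}\dot y(t_0)-\frac{\upsilon_n^2}{\omega_n}\int_{t_0}^t \sin(\omega_n(t-s))\,y(s-\tau)\,ds .
\]
Let $M_m:=\max_{[0,m\tau]}\big(|y|+|\dot y|/\omega_n\big)$. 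The formula yields the recursion $M_{m}\le c\,M_{m-1}+c\,(\upsilon_n^2/\omega_n)\,M_{m-1}\le c\,n\,M_{m-1}$. Hence on $[0,T]$ we get $|C|\le c\,n^{K-1}$ and $|\dot C|\le c\,n^{K}$, and likewise $|S|\le c\,n^{K-2}$ and $|\dot S|\le c\,n^{K-1}$. From the delay equation, $|\ddot S|,|\ddot C|\le c\,n^2\max(|S|,|C|)$ on each open step, with one-sided limits at the grid points. I would state these as a lemma of the form $\sup_{[0,T]}|\partial_t^jC_\tau^{a,b}|\le c\,n^{K+j}$ and $\sup_{[0,T]}|\partial_t^jS_\tau^{a,b}|\le c\,n^{K+j-1}$ for $j=0,1,2$. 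I deliberately allow one power of slack. The exact exponent is the main obstacle: the argument only needs it to fit the margin given by the stated rates, so I will check that crude powers like these suffice rather than optimize them.

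Next I differentiate \eqref{q.20} termwise. The term $T_n(t)\sin(n\pi x/L)$ picks up a factor $n^{i}$ from $\partial_x^i$ ($i\le2$) and a time-derivative factor from the lemma. The convolution terms are handled as follows.
\begin{itemize}
\item For the history integral, $|\partial_t^j\int_{-\tau}^0 S(t-\tau-s)\Phi_n(s)\,ds|\le \tau\,\sup|\partial_t^jS|\,\max|\Phi_n|$ on each step. This is legitimate because $S$ is $C^1$ globally and piecewise $C^2$.
\item For the forcing, use $\partial_t\int_0^t S(t-s)F_n\,ds=\int_0^t \dot S(t-s)F_n\,ds$ and $\partial_t^2(\cdot)=\int_0^t\ddot S(t-s)F_n\,ds+F_n(t)$, as in the proof of Theorem~\ref{t2}.
\end{itemize}
Collecting the bounds, the worst case ($v_{xx}$, $v_{tx}$, $v_{tt}$) gives general terms bounded by
\[
c\big(|\Phi_n(0)|+|\dot\Phi_n(0)|\big)n^{K+2}+c\,n^{2}\upsilon_n^2 n^{K-1}\max|\Phi_n|+c\,n^{K+1}\max|F_n| .
\]
By the three hypotheses each piece is $o(n^{-1-\alpha})$, which is summable. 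The factor $\upsilon_n^2$ in front of the history integral is what accounts for the extra power in the $\Phi_n(s)$ hypothesis.

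Finally I apply the Weierstrass M-test. For $v,v_x,v_{xx},v_t,v_{tx}$ it applies on all of $[0,T]\times[0,L]$, using continuity of each term ($C,S,\dot C,\dot S$ are continuous). This gives absolute and uniform convergence and continuity of the sums. The standard theorem on termwise differentiation of uniformly convergent series, with the differentiated series converging uniformly, then justifies the identities. For $v_{tt}$ I apply the M-test on each closed strip $\bar I_m\times[0,L]$, using the continuous extensions of $\ddot C,\ddot S$ from Theorem~\ref{combined1}. This gives continuity on the open strips and the existence of one-sided limits at $t=m\tau$. The step needing the most care is the polynomial growth lemma and its bookkeeping of exponents; everything after it is routine.
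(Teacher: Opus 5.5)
Your proposal follows the paper's proof. You establish polynomial-in-$n$ bounds on $\partial_t^jC_\tau^{-\omega_n^2,-\upsilon_n^2}$ and $\partial_t^jS_\tau^{-\omega_n^2,-\upsilon_n^2}$ on $[0,T]$, with exactly the paper's exponents $n^{K+j}$ and $n^{K+j-1}$. You then apply the Weierstrass M-test to the series and its termwise derivatives, including the endpoint term $F_n(t)$ from the forced convolution. Your variation-of-constants recursion $M_m\le c\,n\,M_{m-1}$ is a more explicit justification of Step 1 than the paper gives. The paper asserts the bounds from the trigonometric-polynomial structure on each step without tracking how the coefficients depend on $n$. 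Your recursion even yields the sharper $|C_\tau^{-\omega_n^2,-\upsilon_n^2}|\le c\,n^{K-1}$.

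One step is wrong as stated. With the zero-extension convention, $S_\tau^{a,b}$ is not $C^1$ across $0$: $\dot S_\tau^{a,b}$ jumps from $0$ to $\dot S_\tau^{a,b}(0^+)=1$. For $t\in(0,\tau)$ the argument $t-\tau-s$ crosses $0$ at $s=t-\tau$, so the history term is $-\upsilon_n^2\int_{-\tau}^{t-\tau}S_\tau^{a,b}(t-\tau-s)\Phi_n(s)\,ds$.
\begin{itemize}
\item Differentiating once gives no boundary term, because $S_\tau^{a,b}(0)=0$.
\item Differentiating twice gives, besides $-\upsilon_n^2\int_{-\tau}^{t-\tau}\ddot S_\tau^{a,b}(t-\tau-s)\Phi_n(s)\,ds$, the extra term $-\upsilon_n^2\Phi_n(t-\tau)$.
\end{itemize}
This term is not optional. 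It is exactly what makes $T_n$ satisfy $\ddot T_n=-\omega_n^2T_n-\upsilon_n^2\Phi_n(t-\tau)$ on the first step. Your formula and bound for $j=2$ omit it.

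The repair is immediate. The extra term is continuous in $t$ and bounded by $c\,n^2\max_{[-\tau,0]}|\Phi_n|=o(n^{-K-2-\alpha})$, so it is summable. Your conclusions about $v_{tt}$ (continuity on the open strips, one-sided limits at $t=m\tau$, termwise differentiation) therefore stand. The paper's Step 3 includes precisely this term. On the later steps $t-\tau-s>0$, where $S_\tau^{a,b}$ really is $C^1$ with bounded, piecewise continuous $\ddot S_\tau^{a,b}$, so your differentiation under the integral is fine there.
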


\begin{proof}
Write the series \eqref{q.20} (without $G$) as
\[
v(t,x)-G(t,x)=S_1(t,x)+S_2(t,x)+S_3(t,x),
\]
where
\begin{align*}
S_1(t,x)&=\sum_{n=1}^\infty A_n(t)\sin\!\Big(\frac{n\pi}{L}x\Big),\qquad
A_n(t)=C_\tau^{-\omega_n^2,-\upsilon_n^2}(t)\Phi_n(0)+S_\tau^{-\omega_n^2,-\upsilon_n^2}(t)\dot\Phi_n(0),\\
S_2(t,x)&=\sum_{n=1}^\infty B_n(t)\sin\!\Big(\frac{n\pi}{L}x\Big),\qquad
B_n(t)=-\upsilon_n^2\int_{-\tau}^0 S_\tau^{-\omega_n^2,-\upsilon_n^2}(t-\tau-s)\,\Phi_n(s)\,ds,\\
S_3(t,x)&=\sum_{n=1}^\infty C_n(t)\sin\!\Big(\frac{n\pi}{L}x\Big),\qquad
C_n(t)=\int_0^t S_\tau^{-\omega_n^2,-\upsilon_n^2}(t-s)\,F_n(s)\,ds .
\end{align*}

\medskip
\noindent\textbf{Step 1: Kernel bounds on $[0,T]$.}
Fix $r\in\{0,1,2\}$. On each open step interval $((m-1)\tau,m\tau)$ the functions
$C_\tau^{-\omega_n^2,-\upsilon_n^2}$ and $S_\tau^{-\omega_n^2,-\upsilon_n^2}$ are $C^2$, and only finitely many delay-interaction terms
are active (at most $m$ terms). Equivalently, by the method of steps these functions solve, on each such interval, a non-delayed second order
ODE with forcing determined by previous intervals; hence they can be written as linear combinations of $\sin(\omega_n t)$ and $\cos(\omega_n t)$
with coefficients that are polynomials in $t$ whose degrees are bounded by $m$.
Since $m\le K$ on $[0,T]$ and $\omega_n,\upsilon_n\sim n$, there exist constants $M_r=M_r(T)$ such that, for all $n$ and all $t\in[0,T]$,
\begin{align}\label{eq:kernel-bounds}
\big|\partial_t^r C_\tau^{-\omega_n^2,-\upsilon_n^2}(t)\big| \le M_r\,n^{K+r},\qquad
\big|\partial_t^r S_\tau^{-\omega_n^2,-\upsilon_n^2}(t)\big| \le M_r\,n^{K+r-1}.
\end{align}
In addition, using $\upsilon_n^2\sim n^2$ and \eqref{eq:kernel-bounds},
\begin{equation}\label{eq:kernel-bounds-bS}
\big|\upsilon_n^2\,\partial_t^r S_\tau^{-\omega_n^2,-\upsilon_n^2}(t)\big|\le M_r\,n^{K+r+1},\qquad t\in[0,T].
\end{equation}
(Only the polynomial dependence on $n$ is needed below; the constants depend on $T$ but not on $n$.)

\medskip
\noindent\textbf{Step 2: Absolute and uniform convergence of $S_1,S_2,S_3$.}
From \eqref{eq:kernel-bounds} with $r=0$ we get
\[
|A_n(t)|\le M_0\Big(n^K|\Phi_n(0)|+n^{K-1}|\dot\Phi_n(0)|\Big)\le M_0\,n^K\Big(|\Phi_n(0)|+|\dot\Phi_n(0)|\Big).
\]
By the first assumption, for $n$ large we have $|\Phi_n(0)|+|\dot\Phi_n(0)|\le c\,n^{-K-3-\alpha}$, hence
$|A_n(t)|\le c\,n^{-3-\alpha}$, uniformly in $t\in[0,T]$. Since $\sum_{n\ge1}n^{-3-\alpha}<\infty$, the Weierstrass M--test yields
uniform and absolute convergence of $S_1$ on $[0,T]\times[0,L]$.

For $B_n$, by \eqref{eq:kernel-bounds-bS} with $r=0$,
\[
|B_n(t)|
\le \int_{-\tau}^0 \big|\upsilon_n^2 S_\tau^{-\omega_n^2,-\upsilon_n^2}(t-\tau-s)\big|\,|\Phi_n(s)|\,ds
\le \tau\,M_0\,n^{K+1}\max_{-\tau\le s\le 0}|\Phi_n(s)|.
\]
By the second assumption, $\max_{[-\tau,0]}|\Phi_n(s)|\le c\,n^{-K-4-\alpha}$ for $n$ large, hence
$|B_n(t)|\le c\,n^{-3-\alpha}$ uniformly in $t\in[0,T]$. Since $\sum_{n\ge1}n^{-3-\alpha}<\infty$, $S_2$ converges absolutely and uniformly.

For $C_n$, using \eqref{eq:kernel-bounds} with $r=0$,
\[
|C_n(t)|
\le \int_0^t \big|S_\tau^{-\omega_n^2,-\upsilon_n^2}(t-s)\big|\,|F_n(s)|\,ds
\le T\,M_0\,n^{K-1}\max_{0\le s\le T}|F_n(s)|.
\]
By the third assumption, $\max_{[0,T]}|F_n(s)|\le c\,n^{-K-2-\alpha}$ for $n$ large, hence
$|C_n(t)|\le c\,n^{-3-\alpha}$ uniformly in $t\in[0,T]$. Thus $S_3$ converges absolutely and uniformly.

Combining the three parts, $v$ given by \eqref{q.20} converges absolutely and uniformly on $[0,T]\times[0,L]$.

\medskip
\noindent\textbf{Step 3: Termwise differentiation and convergence of derivative series.}
Differentiation in $x$ multiplies the $n$th sine mode by factors of order $n$ (for $v_x$) and $n^2$ (for $v_{xx}$).
Using the bounds above, we obtain dominating summable majorants for the differentiated coefficients.
For example, for $S_1$,
\[
\left|\partial_{xx}\big(A_n(t)\sin(\tfrac{n\pi}{L}x)\big)\right|
\le \Big(\tfrac{n\pi}{L}\Big)^2 |A_n(t)|
\le c\,n^2\cdot n^{-3-\alpha}=c\,n^{-1-\alpha},
\]
which is summable for $\alpha>0$, hence $\partial_{xx}S_1$ converges uniformly; similarly for $\partial_x S_1$.
The same reasoning applies to $S_2$ and $S_3$ with their corresponding bounds.

For first time derivatives, use \eqref{eq:kernel-bounds}--\eqref{eq:kernel-bounds-bS} with $r=1$ to obtain bounds for $\dot A_n,\dot B_n,\dot C_n$,
and proceed exactly as above (again the M--test applies).

For the second time derivative, \eqref{eq:kernel-bounds}--\eqref{eq:kernel-bounds-bS} with $r=2$ gives the majorant $c n^{-1-\alpha}$ on each open step strip. In addition, differentiating the forced convolution twice produces the endpoint term $F_n(t)$, and the zero-extension convention in the history convolution can produce on the first step the term $\upsilon_n^2\Phi_n(t-\tau)$; these are bounded by $c n^{-K-2-\alpha}$ and are therefore summable.

At the grid points $t=m\tau$ the termwise limits may differ from the left and right because the kernels themselves
have stepwise second derivatives, so $v_{tt}$ is only piecewise continuous in $t$, while $v$ and $v_t$ remain continuous.

Therefore, $v, v_x, v_{xx}, v_t, v_{tx}$ are continuous on $[0,T]\times[0,L]$, and $v_{tt}$ is continuous on each open step strip with one-sided limits
at step endpoints. All these derivatives are obtained by termwise differentiation on the corresponding domains.
\end{proof}

\begin{remark}\label{rem:academic_contribution}
We compare \eqref{q.1} with the closest explicit-representation results. Khusainov, Pokojovy and Azizbayov~\cite{Khusainov} studied the one-dimensional pure-delay problem
\[
\eta_{tt}(t,x)=a^2\eta_{xx}(t-\tau,x)+b\eta_x(t-\tau,x)+d\eta(t-\tau,x)+g(t,x),
\]
with non-homogeneous Dirichlet data. At the level of the original equation \eqref{q.1}, this corresponds to suppressing the present operator, that is, taking $a_1=b_1=c_1=0$ and identifying the remaining delayed coefficients. This is a degenerate case for the exponential reduction used in \eqref{q.3}--\eqref{q.7}, so it should be understood as a related limiting case of the model, not as a direct consequence of the non-degenerate reduction theorem.

Rodríguez et al.~\cite{Rodríguez} considered the one-dimensional mixed problem
\[
u_{tt}(t,x)=a^2u_{xx}(t,x)-b\,u(t-\tau,x)
\]
with homogeneous Dirichlet boundary conditions. The case $d=1$ in Jornet~\cite{Jornet} treats
\[
u_{tt}(t,x)=a_1^2u_{xx}(t,x)+a_2^2u_{xx}(t-\tau,x)+b_1u(t,x)+b_2u(t-\tau,x),
\]
also with homogeneous Dirichlet data. These two equations fit into the reduced equation \eqref{q.6} after choosing homogeneous boundary data, zero forcing, and suitable values of $d_1,d_2,a_1,a_2$. In this sense, the closest overlap with \cite{Rodríguez,Jornet} is through the reduced problem, where no first-order spatial derivatives are present.

The additional point in the present paper is the treatment of the original equation \eqref{q.1}, which contains both present and delayed first-order spatial terms. A single exponential change of variables removes both of them only under the compatibility condition \eqref{q.5}. After this reduction, the modal equations contain both the present and delayed spectral parts and lead to the explicit formula \eqref{q.20}, including the history term and the forcing term. The convergence assumptions are stated for stepwise classical solutions, so possible jumps of $u_{tt}$ at the delay nodes $t=k\tau$ are allowed.

\end{remark}

\section{Numerical Example}\label{sec9}

In this section we illustrate the analytical representation by a numerical example.
We consider \eqref{e.1} on $[0,T]\times[0,\pi]$ with $\tau=2$, $T=10$ and $L=\pi$, and choose
\[
a_{1}=1,\quad a_{2}=2,\quad b_{1}=1,\quad b_{2}=4,\quad c_{1}=-\frac{3}{4},\quad c_{2}=27,
\quad g(t,x)=e^{-x/2}\,t.
\]
The boundary and history data are prescribed by
\[
u(t,0)=1,\quad u(t,\pi)=0,\quad t\in[-2,10],
\]
\[
u(t,x)=e^{-x/2},\quad (t,x)\in[-2,0]\times[0,\pi].
\]
Hence the delayed wave problem reads, for $(t,x)\in[0,10]\times(0,\pi)$,
\begin{equation}\label{w1}
\begin{aligned}
\partial_{tt}u(t,x)&=\partial_{xx}u(t,x)+4\,\partial_{xx}u(t-2,x)
+\partial_x u(t,x)+4\,\partial_x u(t-2,x)\\
&-\frac{3}{4}u(t,x)+27u(t-2,x)+e^{-x/2}t,
\end{aligned}
\end{equation}
together with the above boundary conditions on $[-2,10]$ and the history on $[-2,0]$.

\medskip
We apply the transformation $u(t,x)=e^{-x/2}v(t,x)$, which yields the reduced delayed wave equation:
\begin{equation}\label{w2}
\begin{cases}
\partial_{tt}v(t,x)=\partial_{xx}v(t,x)+4\,\partial_{xx}v(t-2,x)-v(t,x)+26\,v(t-2,x)+t,
& (t,x)\in[0,10]\times[0,\pi],\\[1mm]
v(t,0)=1,\quad v(t,\pi)=0,\quad t\in[-2,10],\\
v(t,x)=1,\quad (t,x)\in[-2,0]\times[0,\pi].
\end{cases}
\end{equation}
To homogenize the boundary conditions we introduce
\[
G(x):=1-\frac{x}{\pi},\qquad v(t,x)=w(t,x)+G(x),
\]
so that $w(t,0)=w(t,\pi)=0$ for $t\in[-2,10]$ and the history becomes
\[
w(t,x)=1-G(x)=\frac{x}{\pi},\quad (t,x)\in[-2,0]\times[0,\pi].
\]
Expanding in the sine basis,
\[
w(t,x)=\sum_{n=1}^\infty q_n(t)\sin(nx),
\]
the history coefficients are
\[
\Phi_n:=\frac{2}{\pi}\int_0^\pi \frac{s}{\pi}\sin(ns)\,ds
=\frac{2}{\pi}\frac{(-1)^{n+1}}{n},\quad n\in\mathbb{N}.
\]
Moreover, the Fourier coefficients of the lifted source term
\[
F(t,x):=t+25\Big(1-\frac{x}{\pi}\Big)
\]
are given by
\[
F_n(t):=\frac{2}{\pi}\int_0^\pi F(t,s)\sin(ns)\,ds
=\frac{2t\big(1-(-1)^n\big)+50}{\pi n}.
\]
For each mode $n\ge 1$ we obtain the scalar delay equation (with $\tau=2$)
\[
\ddot q_n(t)=a_n q_n(t)+b_n q_n(t-2)+F_n(t),\qquad
a_n:=-(n^2+1),\quad b_n:=26-4n^2,
\]
and hence, by Theorems~\ref{c1} and~\ref{t2}, the solution can be represented in terms of the delayed perturbation functions
$C_{2}^{a_n,b_n}$ and $S_{2}^{a_n,b_n}$.

In Figure \ref{fig:uN_surface_heatmap}, we use the truncated approximation
\[
v_N(t,x):=G(x)+\sum_{n=1}^N q_n(t)\sin(nx),\qquad
u_N(t,x):=e^{-x/2}v_N(t,x),
\]
where both the Fourier series and the defining series of $C_{2}^{a_n,b_n}$ and $S_{2}^{a_n,b_n}$
are truncated. We evaluate $u_N$ on a uniform grid and plot its surface on $[0,10]\times[0,\pi]$.
As expected for stepwise classical solutions of delay equations, $u_N$ and $\partial_t u_N$ are continuous in $t$,
while $\partial_{tt}u_N$ may exhibit jump discontinuities at $t=2,4,6,8,10$.

\begin{figure}[h!]
\centering
\includegraphics[width=0.6\textwidth]{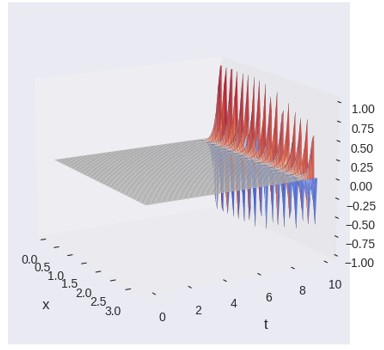}\hfill
\includegraphics[width=0.6\textwidth]{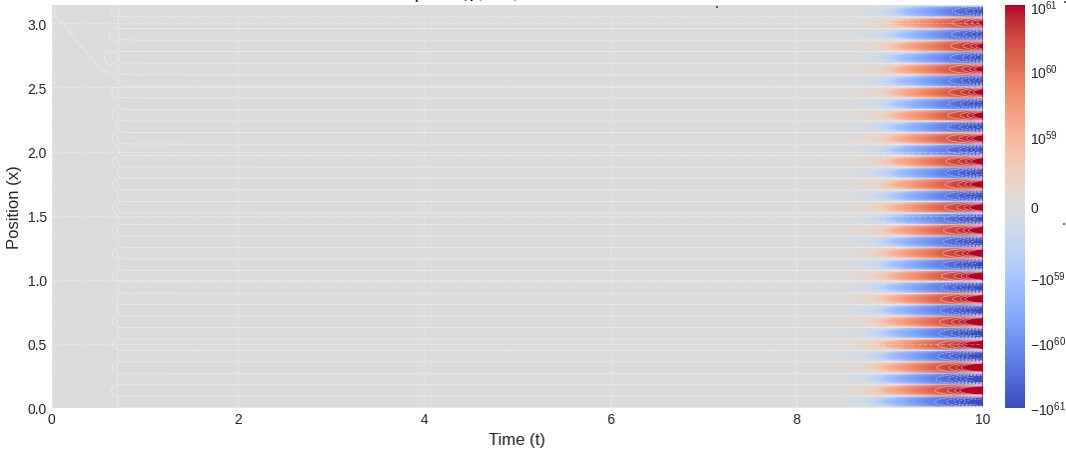}
\caption{Truncated approximation $u_N(t,x)$ for the numerical example.
\textbf{Top:} 3D surface plot of $u_N(t,x)$ over $(t,x)\in[0,T]\times[0,\pi]$.
\textbf{Bottom:} 2D heatmap (top view) of the same data, where colors encode the solution magnitude.
For visualization we use percentile clipping and a symmetric-log color normalization to reveal the structure in the presence of large-amplitude peaks.}
\label{fig:uN_surface_heatmap}
\end{figure}


\section{Conclusion}\label{sec:conclusion}

The main findings of this study can be summarized as follows:

\begin{itemize}
\item An explicit analytical representation was derived for a generalized one-dimensional linear wave equation with a single time delay, covering non-homogeneous Dirichlet boundary data and prescribed history functions.
\item By separation of variables and Sturm--Liouville spectral decomposition, the delayed PDE was reduced to a countable family of second-order linear delay differential equations for the Fourier modes, whose fundamental solutions are expressed through the delayed perturbation functions $C_{\tau}^{a,b}(t)$ and $S_{\tau}^{a,b}(t)$.
\item The approach extends earlier formulations (e.g.\ Khusainov et al.~\cite{Khusainov}) by providing an explicit Fourier-based solution formula for a broader class of delayed wave models with both delayed and non-delayed spatial operators.
\item Sufficient conditions were established to guarantee absolute and uniform convergence of the Fourier series representation and of the termwise differentiated series in $x$; in time, the solution is stepwise classical, i.e.\ $u$ and $\partial_t u$ are continuous while $\partial_{tt}u$ may have jump discontinuities at the delay grid points.
\item A numerical example illustrated how the truncated analytical series can be evaluated effectively and visualized, supporting the applicability of the representation for simulation and qualitative analysis of delayed wave dynamics.
\end{itemize}

\section*{Declarations}
\textbf{Conflicts of Interest} The authors declare no conflict of interest.

\section*{Data Availability}
 No datasets were generated or analysed during the current study.

\section*{Author Contributions}
Javad A. Asadzade: Conceptualization, Formal analysis, Writing -Original draft preparation; Jasarat J. Gasimov: Conceptualization, Formal analysis, Writing - Original draft preparation; Nazim I. Mahmudov: Conceptualization, Methodology, Supervision; Ismail T. Huseynov: Methodology, Investigation, Software,  Writing - Original draft preparation, Writing - Reviewing and Editing, Supervision.







\appendix

\section{Proof of Theorem~\ref{combined1}}\label{app:proof-combined1}

In this appendix we give a complete proof that the delayed perturbation functions $C_{\tau}^{a,b}$ and $S_{\tau}^{a,b}$ from
Definition~\ref{cossin} solve the linear homogeneous second-order delayed ordinary differential equation:
\begin{equation}\label{app:eq-hom}
\ddot y(t)=a\,y(t)+b\,y(t-\tau),\qquad t\in [0,T],
\end{equation}
in the stepwise sense (i.e.\ pointwise on every open step interval).

Fix an integer $n\ge 0$ and let $t\in(n\tau,(n+1)\tau)$. Then $(t-k\tau)_+=t-k\tau$ holds for $k=0,1,\dots,n$, and
$(t-k\tau)_+=0$ holds for all $k\ge n+1$. Hence, on the interval $(n\tau,(n+1)\tau)$ the defining series reduce to the finite $k$--sums
\begin{align}
C_{\tau}^{a,b}(t)
&=\sum_{k=0}^{n}\sum_{m=k}^{\infty}\binom{m}{k}a^{\,m-k}b^{\,k}\frac{(t-k\tau)^{2m}}{(2m)!}, \label{app:C-finite}\\
S_{\tau}^{a,b}(t)
&=\sum_{k=0}^{n}\sum_{m=k}^{\infty}\binom{m}{k}a^{\,m-k}b^{\,k}\frac{(t-k\tau)^{2m+1}}{(2m+1)!}. \label{app:S-finite}
\end{align}
For fixed $k$, the inner series in \eqref{app:C-finite} and \eqref{app:S-finite} are entire power series in $(t-k\tau)$, hence can be
differentiated termwise on $(n\tau,(n+1)\tau)$. Since the outer sum is finite, it follows that $C_{\tau}^{a,b}$ and $S_{\tau}^{a,b}$
are $C^{\infty}$ on each open step interval $(n\tau,(n+1)\tau)$, and therefore $C^{2}$ there.
Moreover, because new $k$--terms become active exactly at the grid points $t=n\tau$, the second derivatives may have jump discontinuities at
$t=n\tau$ (while $C_{\tau}^{a,b}$ and $S_{\tau}^{a,b}$ and their first derivatives remain continuous). This is precisely the usual
stepwise regularity for delay equations.

We first verify \eqref{app:eq-hom} for $C_{\tau}^{a,b}$. Fix an integer $n\ge 0$ and $t\in(n\tau,(n+1)\tau)$. Differentiating
\eqref{app:C-finite} twice gives
\begin{align*}
\ddot C_{\tau}^{a,b}(t)
&=\sum_{k=0}^{n}\sum_{m=k}^{\infty}\binom{m}{k}a^{\,m-k}b^{\,k}\frac{(t-k\tau)^{2m-2}}{(2m-2)!}.
\end{align*}
Perform the index shift $r=m-1$ (so $r\ge k-1$) and use $(2m-2)!=(2r)!$ to obtain
\begin{align*}
\ddot C_{\tau}^{a,b}(t)
&=\sum_{k=0}^{n}\sum_{r=k-1}^{\infty}\binom{r+1}{k}a^{\,r+1-k}b^{\,k}\frac{(t-k\tau)^{2r}}{(2r)!}.
\end{align*}
Apply Pascal’s identity $\binom{r+1}{k}=\binom{r}{k}+\binom{r}{k-1}$ and split the sum:
\begin{align*}
\ddot C_{\tau}^{a,b}(t)
&=\sum_{k=0}^{n}\sum_{r=k}^{\infty}\binom{r}{k}a^{\,r+1-k}b^{\,k}\frac{(t-k\tau)^{2r}}{(2r)!}
+\sum_{k=1}^{n}\sum_{r=k-1}^{\infty}\binom{r}{k-1}a^{\,r+1-k}b^{\,k}\frac{(t-k\tau)^{2r}}{(2r)!}\\
&=: I_1(t)+I_2(t).
\end{align*}
For $I_1$, factor out $a$ and recognize \eqref{app:C-finite}:
\begin{align*}
I_1(t)
&=a\sum_{k=0}^{n}\sum_{r=k}^{\infty}\binom{r}{k}a^{\,r-k}b^{\,k}\frac{(t-k\tau)^{2r}}{(2r)!}
=a\,C_{\tau}^{a,b}(t).
\end{align*}
For $I_2$, set $j=k-1$ (so $j=0,1,\dots,n-1$) to obtain
\begin{align*}
I_2(t)
&=\sum_{j=0}^{n-1}\sum_{r=j}^{\infty}\binom{r}{j}a^{\,r-j}b^{\,j+1}\frac{(t-(j+1)\tau)^{2r}}{(2r)!}
=b\sum_{j=0}^{n-1}\sum_{r=j}^{\infty}\binom{r}{j}a^{\,r-j}b^{\,j}\frac{(t-\tau-j\tau)^{2r}}{(2r)!}.
\end{align*}
Since $t\in(n\tau,(n+1)\tau)$ implies $t-\tau\in((n-1)\tau,n\tau)$, the truncation gives the representation
\[
C_{\tau}^{a,b}(t-\tau)
=\sum_{j=0}^{n-1}\sum_{r=j}^{\infty}\binom{r}{j}a^{\,r-j}b^{\,j}\frac{(t-\tau-j\tau)^{2r}}{(2r)!},
\]
and thus $I_2(t)=b\,C_{\tau}^{a,b}(t-\tau)$. Consequently,
\[
\ddot C_{\tau}^{a,b}(t)=a\,C_{\tau}^{a,b}(t)+b\,C_{\tau}^{a,b}(t-\tau),
\qquad t\in(n\tau,(n+1)\tau),
\]
for every integer $n\ge 0$, proving \eqref{app:eq-hom} for $C_{\tau}^{a,b}$ pointwise on each open step interval.

The proof for $S_{\tau}^{a,b}$ is analogous. Fix an integer $n\ge 0$ and $t\in(n\tau,(n+1)\tau)$. Differentiating \eqref{app:S-finite}
twice yields
\begin{align*}
\ddot S_{\tau}^{a,b}(t)
&=\sum_{k=0}^{n}\sum_{m=k}^{\infty}\binom{m}{k}a^{\,m-k}b^{\,k}\frac{(t-k\tau)^{2m-1}}{(2m-1)!}.
\end{align*}
With the shift $r=m-1$ one obtains
\begin{align*}
\ddot S_{\tau}^{a,b}(t)
&=\sum_{k=0}^{n}\sum_{r=k-1}^{\infty}\binom{r+1}{k}a^{\,r+1-k}b^{\,k}\frac{(t-k\tau)^{2r+1}}{(2r+1)!}.
\end{align*}
Using Pascal’s identity and repeating the splitting and reindexing as above yields
\[
\ddot S_{\tau}^{a,b}(t)=a\,S_{\tau}^{a,b}(t)+b\,S_{\tau}^{a,b}(t-\tau),
\qquad t\in(n\tau,(n+1)\tau),
\]
for every integer $n\ge 0$.

Finally, the initial values follow directly from Definition~\ref{cossin}. At $t=0$ the only nonzero term in $C_{\tau}^{a,b}(0)$ is $k=0$, $m=0$,
hence $C_{\tau}^{a,b}(0)=1$, and $\dot C_{\tau}^{a,b}(0)=0$ since all terms in $C_{\tau}^{a,b}$ are even powers. Likewise,
$S_{\tau}^{a,b}(0)=0$ and $\dot S_{\tau}^{a,b}(0)=1$ follow from the leading term $m=0$ in \eqref{def:Stau}, i.e.\ $S_{\tau}^{a,b}(t)=t+O(t^{3})$.
This completes the proof of Theorem~\ref{combined1}.
\qed

\section{Proof of Theorem~\ref{c1}}\label{app:proof-c1}

Let $\varphi\in C^{1}([-\tau,0],\mathbb{R})$ be given and define $y_0(t)$ for $t\in[0,T]$ by \eqref{f11}, i.e.
\[
y_{0}(t)= C_{\tau}^{a,b}(t)\,\varphi(0) + S_{\tau}^{a,b}(t)\,\dot{\varphi}(0)
+ \int_{-\tau}^{0} S_{\tau}^{a,b}(t-\tau-s)\,b\,\varphi(s)\,ds.
\]
We show that $y_0$ is the stepwise classical solution of \eqref{eq98} with $f\equiv 0$.

At $t=0$, using $C_{\tau}^{a,b}(0)=1$, $S_{\tau}^{a,b}(0)=0$ and $S_{\tau}^{a,b}(-\tau-s)=0$ for $s\in[-\tau,0]$ (because the argument is $\le 0$
and the truncation $(\cdot)_+$ annihilates every term of $S_{\tau}^{a,b}$), we obtain
\[
y_0(0)=\varphi(0).
\]
Differentiating \eqref{f11} with respect to $t$ gives
\[
\dot y_0(t)= \dot C_{\tau}^{a,b}(t)\,\varphi(0) + \dot S_{\tau}^{a,b}(t)\,\dot{\varphi}(0)
+ \int_{-\tau}^{0} \dot S_{\tau}^{a,b}(t-\tau-s)\,b\,\varphi(s)\,ds,
\]
and at $t=0$, since $\dot C_{\tau}^{a,b}(0)=0$, $\dot S_{\tau}^{a,b}(0)=1$, and $\dot S_{\tau}^{a,b}(-\tau-s)=0$ for $s\in[-\tau,0]$, we get
\[
\dot y_0(0)=\dot\varphi(0).
\]
As usual for delay equations, the solution is prescribed by the history on $[-\tau,0]$ and by \eqref{f11} on $[0,T]$.

By Theorem~\ref{combined1}, $C_{\tau}^{a,b}$ and $S_{\tau}^{a,b}$ are $C^{2}$ on each open step interval $(n\tau,(n+1)\tau)$ and have
continuous first derivatives on $[0,\infty)$. Since $\varphi$ is continuous on $[-\tau,0]$, the integral term in \eqref{f11} is continuously
differentiable on $[0,T]$ and twice differentiable on each open step interval. Hence $y_0$ is a stepwise classical function on $[0,T]$.

Fix an integer $n\ge 0$ and let $t\in(n\tau,(n+1)\tau)$. Differentiating \eqref{f11} twice and using differentiation under the integral sign
(valid on each open step interval since the integrand is $C^{2}$ in $t$ there) yields
\begin{align*}
\ddot y_0(t)
&=\ddot C_{\tau}^{a,b}(t)\,\varphi(0)+\ddot S_{\tau}^{a,b}(t)\,\dot\varphi(0)
+ \int_{-\tau}^{0}\ddot S_{\tau}^{a,b}(t-\tau-s)\,b\,\varphi(s)\,ds.
\end{align*}
Now apply Theorem~\ref{combined1} to $C_{\tau}^{a,b}$ and $S_{\tau}^{a,b}$ at time $t$ and also at time $t-\tau-s$ (the latter is legitimate
because for any fixed $t$ the integrand vanishes whenever $t-\tau-s\le 0$):
\[
\ddot C_{\tau}^{a,b}(t)=a\,C_{\tau}^{a,b}(t)+b\,C_{\tau}^{a,b}(t-\tau),\qquad
\ddot S_{\tau}^{a,b}(t)=a\,S_{\tau}^{a,b}(t)+b\,S_{\tau}^{a,b}(t-\tau),
\]
and
\[
\ddot S_{\tau}^{a,b}(t-\tau-s)=a\,S_{\tau}^{a,b}(t-\tau-s)+b\,S_{\tau}^{a,b}(t-2\tau-s).
\]
Substituting these relations into $\ddot y_0(t)$ gives
\begin{align*}
\ddot y_0(t)
&=a\Big(C_{\tau}^{a,b}(t)\varphi(0)+S_{\tau}^{a,b}(t)\dot\varphi(0)+\int_{-\tau}^{0}S_{\tau}^{a,b}(t-\tau-s)\,b\,\varphi(s)\,ds\Big)\\
&\quad +b\Big(C_{\tau}^{a,b}(t-\tau)\varphi(0)+S_{\tau}^{a,b}(t-\tau)\dot\varphi(0)+\int_{-\tau}^{0}S_{\tau}^{a,b}(t-2\tau-s)\,b\,\varphi(s)\,ds\Big).
\end{align*}
The expression in the first parentheses is exactly $y_0(t)$. The expression in the second parentheses is exactly $y_0(t-\tau)$, because
\eqref{f11} with $t$ replaced by $t-\tau$ reads
\[
y_0(t-\tau)=C_{\tau}^{a,b}(t-\tau)\varphi(0)+S_{\tau}^{a,b}(t-\tau)\dot\varphi(0)+\int_{-\tau}^{0}S_{\tau}^{a,b}(t-2\tau-s)\,b\,\varphi(s)\,ds.
\]
Therefore,
\[
\ddot y_0(t)=a\,y_0(t)+b\,y_0(t-\tau),\qquad t\in(n\tau,(n+1)\tau),
\]
for every integer $n\ge 0$, i.e.\ $y_0$ satisfies \eqref{eq98} with $f\equiv 0$ pointwise on each open step interval.

\section{Proof of Theorem~\ref{t2}}\label{app:proof-t2}

Let $f\in C([0,T],\mathbb{R})$. Define $y_p(t)=0$ for $t\in[-\tau,0]$, and for $t\in[0,T]$ define
\[
y_p(t):=\int_0^t S_{\tau}^{a,b}(t-s)\,f(s)\,ds.
\]
Clearly $y_p\in C([-\tau,T])$ and $y_p(0)=0$. Since $\dot S_{\tau}^{a,b}$ is continuous on $[0,\infty)$, the map
$t\mapsto \dot S_{\tau}^{a,b}(t-s)f(s)$ is continuous for $t\ge 0$ and $s\in[0,t]$, and by Leibniz' rule we obtain for $t\in(0,T]$
\[
\dot y_p(t)=\int_0^t \dot S_{\tau}^{a,b}(t-s)\,f(s)\,ds,
\]
hence $\dot y_p\in C([0,T])$ and also $\dot y_p(0)=0$ because the integral has zero length at $t=0$.

Fix an integer $n\ge 0$ and let $t\in(n\tau,(n+1)\tau)\cap(0,T]$. On such an open step interval, the function $S_{\tau}^{a,b}$ is $C^2$ and
termwise differentiation of the defining series is legitimate; therefore we may differentiate $\dot y_p$ under the integral sign on this interval.
A second application of Leibniz' rule yields
\[
\ddot y_p(t)=\int_0^t \ddot S_{\tau}^{a,b}(t-s)\,f(s)\,ds+\dot S_{\tau}^{a,b}(0)\,f(t).
\]
From Theorem~\ref{combined1} we have $\dot S_{\tau}^{a,b}(0)=1$ and, for $t-s$ lying in an open step interval,
\[
\ddot S_{\tau}^{a,b}(t-s)=a\,S_{\tau}^{a,b}(t-s)+b\,S_{\tau}^{a,b}(t-s-\tau).
\]
Substituting this into the expression for $\ddot y_p(t)$ gives
\begin{align*}
\ddot y_p(t)
&=a\int_0^t S_{\tau}^{a,b}(t-s)\,f(s)\,ds
+b\int_0^t S_{\tau}^{a,b}(t-\tau-s)\,f(s)\,ds
+f(t)\\
&=a\,y_p(t)
+b\int_0^t S_{\tau}^{a,b}(t-\tau-s)\,f(s)\,ds
+f(t).
\end{align*}
Because $S_{\tau}^{a,b}(\eta)=0$ for $\eta<0$, we have for every $t\ge 0$
\[
\int_0^t S_{\tau}^{a,b}(t-\tau-s)\,f(s)\,ds=\int_0^{t-\tau} S_{\tau}^{a,b}(t-\tau-s)\,f(s)\,ds,
\]
with the convention that the right-hand side is $0$ if $t<\tau$. By the definition of $y_p$ this equals $y_p(t-\tau)$ (and for $t<\tau$ also
$y_p(t-\tau)=0$ because $y_p$ is zero on $[-\tau,0]$). Hence, for all $t\in(n\tau,(n+1)\tau)\cap(0,T]$,
\[
\ddot y_p(t)=a\,y_p(t)+b\,y_p(t-\tau)+f(t).
\]
Thus $y_p$ satisfies \eqref{eq98} pointwise on every open step interval, with $y_p(t)=0$ and $\dot y_p(t)=0$ on $[-\tau,0]$.
The possible jump discontinuities of $\ddot y_p$ at the grid points $t=k\tau$ come only from the corresponding jump behavior of
$\ddot S_{\tau}^{a,b}$, while $y_p$ and $\dot y_p$ remain continuous. Therefore $y_p$ is a stepwise classical solution of \eqref{eq98} with
zero initial history, and \eqref{13} holds. This completes the proof.
\qed

\end{document}